\documentclass{article}
\usepackage{preamble}

\newkeytheorem{definition}[name = Definition, numberwithin=section]
\newkeytheorem{problem}[name = Problem]
\newkeytheorem{theorem}[name = Theorem, style = theorem, numberwithin=section]
\newkeytheorem{proposition}[name = Proposition, style = theorem, numberwithin=section]
\newkeytheorem{corollary}[name = Corollary, style = theorem, numberwithin=section]
\newkeytheorem{lemma}[name = Lemma, style = theorem, numberwithin=section]
\newkeytheorem{proof}[name = Proof, style = proof, numbered=no]

\newcommand{\cL}{\mathcal L}
\newcommand{\fL}{\mathfrak L}

\title{Meromorphic order from Taylor coefficients, least singular values of Hankel matrices and rational approximants}
\author{Kei Beauduin}
\date{}

\begin{document}

\maketitle

\begin{abstract}
	We introduce a natural notion of generalized order for entire and meromorphic functions and express it in terms of their Taylor coefficients. More precisely, one formula uses the least singular values of Hankel matrices of the coefficients, and the other uses their determinants. These formulas generalize the classical coefficient formula for the order of entire functions and its extensions by Edrei and Sheremeta. The proof combines a theorem of Pommerenke with a bound for the Nevanlinna characteristic that results from a convergence theorem for rational approximants introduced by Prokhorov. We also characterize the largest class of growth scales for which the associated generalized order is well-behaved.
\end{abstract}

\setcounter{tocdepth}{3}\tableofcontents

\section{Introduction}

Let $(c_n)_{n\in\Z}$ be a complex sequence indexed by the integers such that $c_n = 0$ for $n < 0$, and let
\[
f(z) := \sum_{n = 0}^\infty c_n z^n
\]
be the associated power series. When $f$ is an entire function, we can assign it an \emph{order} defined by
\begin{equation}\label{e:orddef}
	\rho = \rho(f) := \inf\cbra{\varrho > 0 : f(z) = O\pa{e^{|z|^\varrho}}} \in [0, \infty],
\end{equation}
which measures the growth of $f$. Building on partial results of Poincaré \cite{poincare1883}, Hadamard proved in 1893 a now classical expression for order in terms of the Taylor coefficients of $f$ \cite{hadamard1893,levin1996}.
\begin{theorem}[Poincaré--Hadamard, 1893]\label{t:poincaré}
	If $f$ is entire,
	\begin{equation}\label{e:rhoE}
		\rho = \limsup_{n\to\infty} \frac{n \log n}{\log(1/|c_n|)}.
	\end{equation}
\end{theorem}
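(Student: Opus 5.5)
Write $\mu := \limsup_{n\to\infty} \frac{n\log n}{\log(1/|c_n|)}$, with the convention that terms with $c_n=0$ contribute $0$. We prove $\mu\le\rho$ and $\rho\le\mu$ separately. We may assume $f$ is not a polynomial, since otherwise both sides vanish. For all large $n$ we have $|c_n|<1$, because $f$ is entire and $|c_n|^{1/n}\to 0$. So the denominators are eventually positive.

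\emph{Step 1: $\mu\le\rho$, via Cauchy's estimates.} Assume $\rho<\infty$ and fix $\varrho>\rho$. Then $M(r):=\max_{|z|=r}|f(z)|\le e^{r^\varrho}$ for all $r\ge r_0$. Cauchy's inequality gives
\[
|c_n|\le M(r)\,r^{-n}\le \exp\pa{r^\varrho-n\log r}
\]
for every $r\ge r_0$. We optimize by taking $r=(n/\varrho)^{1/\varrho}$, which is admissible for $n$ large. This yields
\[
\log(1/|c_n|)\ge \frac{n}{\varrho}\log\frac{n}{\varrho}-\frac n\varrho=\frac{n\log n}{\varrho}\,(1+o(1)).
\]
Hence $\frac{n\log n}{\log(1/|c_n|)}\le \varrho(1+o(1))$. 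Letting $\varrho\downarrow\rho$ gives $\mu\le\rho$.

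\emph{Step 2: $\rho\le\mu$, by bounding the series termwise.} Assume $\mu<\infty$ and fix $\sigma>\mu$. For $n\ge n_0$ we have $\log(1/|c_n|)\ge \frac{n\log n}{\sigma}$, that is, $|c_n|\le n^{-n/\sigma}$. Then
\[
M(r)\le P(r)+\sum_{n\ge n_0} r^n n^{-n/\sigma},
\]
where $P$ is a polynomial. We split the sum at $N:=\lceil (2r)^\sigma\rceil$.

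For $n>N$ we have $r\,n^{-1/\sigma}\le 1/2$. So the tail is at most $\sum 2^{-n}\le 1$.

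For $n\le N$, we bound each term by its maximum over real $n>0$. The function $n\mapsto n\log r-\frac n\sigma\log n$ is maximized at $n=r^\sigma/e$, with maximum value $\frac{r^\sigma}{e\sigma}$. So the head is at most
\[
N\,e^{r^\sigma/(e\sigma)}\le \pa{(2r)^\sigma+1}\,e^{r^\sigma/(e\sigma)}.
\]
Therefore $M(r)=O\pa{e^{r^{\sigma'}}}$ for any $\sigma'>\sigma$, so $\rho\le\sigma'$. Letting $\sigma'\downarrow\mu$ gives $\rho\le\mu$.

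\emph{Main obstacle and edge cases.} The delicate points are the infinite cases and the vanishing coefficients. If $\rho=\infty$, Step 1 is vacuous. If $\mu=\infty$, Step 2 is vacuous. Coefficients with $c_n=0$ only help both bounds. The step requiring the most care is Step 2: one must make sure the polynomial factor $N$ and the lower-order terms are absorbed. This is why we pass to an arbitrary $\sigma'>\sigma$ rather than claiming the exponent $\sigma$ exactly.
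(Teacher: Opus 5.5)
Your proof is correct. It is the classical direct argument: Cauchy's inequality at the optimal radius $r=(n/\varrho)^{1/\varrho}$ gives $\mu\le\rho$, and a head/tail split of the Taylor series gives $\rho\le\mu$.

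The paper gives no standalone proof of this theorem. It cites the result and later recovers it as the case $m=1$, $L=\mathrm{id}$ of \Cref{t:rhoEL}, together with \Cref{c:rhoLM}. The ingredients there are the same as yours, but arranged for a more general setting.

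\begin{itemize}
\item Lower bound. Because $\rho_L$ is defined through $T(r,f)$, the paper first passes from $T$ to $M$ by Harnack's inequality, $\log^+M(r,f)\le\frac{R+r}{R-r}T(R,f)$. It does this at $r=R/(1+R^{-\epsilon})$ with $R^q=n$, and only then applies Cauchy's estimate.
\item Upper bound. The paper goes through \Cref{t:Tbound}, whose Prokhorov approximants are just Taylor polynomials when $m=1$. For $M$ it uses $M(r,f)\le M(\rho,f)N(r/\rho)^N+N+1$. Here the head $n<N=\lceil r^q\rceil$ is bounded by Cauchy's estimate on a fixed small circle. The tail is bounded by $|c_n|\le B^{-n}$ with $B=r(1+1/N)$.
\end{itemize}

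Your head bound, the maximum of $r^n n^{-n/\sigma}$ over $n$, is sharper. It gives $\log M(r)\le r^\sigma/(e\sigma)+O(\log r)$, which is the estimate behind the classical type formula. The paper's bound $\log M(r,f)=O(r^q\log r)$ loses a factor $\log r$, which is harmless for the order.

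What the paper's arrangement buys is generality. Your tail effectively compares with $B=2r$, which works because $\log 2r\sim\log r$. For scales in $\fL$ such as $L=\exp$, one has $L(\log 2r)\not\sim L(\log r)$. That is why the paper must take $B$ only slightly larger than $r$.
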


In 1901, Borel \cite{borel1901} extended the notion of order to meromorphic functions on the complex plane. This leads to the following question.

\begin{problem}\label{pb:order}
	Can we express the order of a meromorphic function in terms of its Taylor coefficients?
\end{problem}

At first sight, the answer might seem negative: unlike Taylor series of entire functions, Laurent series of meromorphic functions are intrinsically local. Nevertheless, their coefficients uniquely determine the function, and thus its properties on the entire plane. To formulate our answer, consider the \emph{Hankel determinants}
\[
H_{n,m} := \det (c_{n+i - j})_{0 \le i, j < m} = \begin{vmatrix} c_n & c_{n - 1} & \cdots & c_{n - m+1} \\
c_{n+1} & c_n & \cdots & c_{n - m+2} \\
\vdots & \vdots & \ddots & \vdots \\
c_{n+m - 1} & c_{n+m - 2} & \cdots & c_n \end{vmatrix}.
\]
They yield the following positive answer to \Cref{pb:order}.
\begin{theorem}\label{t:rhoH}
	If $f$ is meromorphic on the complex plane,
	\begin{equation}\label{e:rhoH}
		\rho = \limsup_{n,m\to\infty} \frac{nm\log \min(n, m)}{\log(1/|H_{n,m}|)}.
	\end{equation}
\end{theorem}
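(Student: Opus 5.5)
We prove the two inequalities separately, writing $\sigma$ for the right-hand side of \eqref{e:rhoH}. Throughout, $f=g/h$ with $g,h$ entire and $h$ built from the poles. By Borel's extension of Hadamard's factorization we may take $\rho(g),\rho(h)\le\rho$. For $m\ge1$ let $d_m(r)$ denote the product of the $m$ largest values $|a_k|^{-1}r$ over the poles $a_k$ (counted with multiplicity) with $|a_k|<r$, and set $d_m(r)=1$ if there are none.

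\emph{Upper bound $\sigma\le\rho$.} This is the direction where we need an upper estimate on $|H_{n,m}|$. Fix $\varrho>\rho$. I would use Pommerenke's estimate for Hankel determinants of a function meromorphic in a disc. Write $H_{n,m}$ as an $m$-fold contour integral via Cauchy's formula, $c_{n+i-j}=\frac1{2\pi i}\oint f(z)z^{-n-i+j-1}\,dz$, and expand the determinant as a Vandermonde-type integral
\[
H_{n,m}=\frac{1}{(2\pi i)^m m!}\oint\!\cdots\!\oint \prod_k f(z_k)z_k^{-n-m}\prod_{k<l}(z_k-z_l)^2\,dz_1\cdots dz_m .
\]
Poles inside the circle $|z|=r$ are handled by multiplying $f$ by the Blaschke-type polynomial of the $m$ largest poles, which only shifts rows; this is exactly Pommerenke's trick. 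It yields
\[
|H_{n,m}|\le C^m\,M(r,hf)^m\, r^{-nm}\, m^{m}\,(\text{pole factor}).
\]
Here $M(r,\cdot)$ can be replaced by $\exp(O(T(2r,f)))$ using the Poisson--Jensen estimate, and $T(r,f)\le r^{\varrho}$. Choose $r\approx \min(n,m)^{1/\varrho}$ (more precisely $r^\varrho\approx \min(n,m)/\varrho$). Then $\log(1/|H_{n,m}|)\ge \frac{nm}{\varrho}\log\min(n,m)(1-o(1))$, so $\sigma\le\varrho$.

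The symmetric role of $n$ and $m$ deserves a remark. When $m>n$, note that $H_{n,m}$ with $c_k=0$ for $k<0$ is triangular-ish, and the bound should come from the same integral with radius dictated by $n$. I expect to need care here, and I may treat $m\le n$ and $m>n$ separately.

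\emph{Lower bound $\rho\le\sigma$.} This is the main obstacle. Fix $\varrho>\sigma$, so $|H_{n,m}|\le \min(n,m)^{-nm/\varrho}$ for large $n,m$. The aim is to show $T(r,f)=O(r^{\varrho+\epsilon})$. I would use Prokhorov-type rational approximants, i.e.\ Padé approximants $\pi_{n,m}=p/q$ of type $(n,m)$, whose error has the classical determinant representation
\[
f-\pi_{n,m}=\sum_{k} \frac{H_{n+k+1,m+1}}{H_{n+k,m}}\,(\dots)z^{n+m+k+1}\cdots .
\]
Take diagonal-type choices $n\approx m$ and use the smallness of the $H$'s along rows to show that on $|z|=r$ the rational functions $R_m=\pi_{m,m}$ converge to $f$ in capacity / spherical metric at a geometric rate once $m^{1/\varrho}\gg r$. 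Then
\[
T(r,f)\le T(r,R_m)+O(1)\le 2m\log r+O(1),
\]
since a rational function of degree $2m$ has characteristic at most $2m\log r$. Choosing $m\approx r^{\varrho}$ gives $T(r,f)=O(r^{\varrho}\log r)$, hence $\rho\le\varrho$.

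The difficulty is that the ratios $H_{n+1,m+1}/H_{n,m}$ require lower bounds on denominators, which the hypothesis does not give. Following Prokhorov, I would instead use approximants that minimize an $L^2$ error, whose error is controlled by least singular values of Hankel matrices. The product of the singular values is $|H_{n,m}|$, so an averaging over $m$ converts the determinant bound into singular-value bounds for many $m$. Passing from a sequence of approximants converging in capacity to a bound on the Nevanlinna characteristic, for instance through Jensen's formula applied to $f q_m - p_m$, is the step I expect to be the real work.
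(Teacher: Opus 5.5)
Your upper bound $\sigma\le\rho$ is the paper's argument once you simply cite Pommerenke's estimate $|H_{n,m}|\le e^{(n+m)T(r,f)}r^{-nm}$, which holds without normalization, and take $r^{\varrho}\asymp\min(n,m)$. Since $(n+m)/(nm)\le 2/\min(n,m)$, you need neither Borel's factorization nor a separate treatment of $m>n$. Do not rely on your own contour-integral version: a factor $m^m$ cannot be absorbed when $\log m\gg n\log n$, and the $\limsup$ over all pairs includes such pairs.

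The genuine gap is the reverse inequality $\rho\le\sigma$. The step you defer as ``the real work'' is the whole content of that direction, and the mechanism you do propose does not work. Convergence in capacity of near-diagonal approximants gives no quantitative control of $\int_0^{2\pi}\log^+|f-R_m|\,d\theta$ or of $N(r,f)$. Because $m\approx r^{\varrho}$ is tied to $r$, the $O(1)$ in $T(r,f)\le T(r,R_m)+O(1)$ would have to be uniform in $m$, and no qualitative convergence statement provides that. Nor is $T(r,R_m)\le 2m\log r+O(1)$ available. For coprime $P,Q$ one has $T(r,P/Q)=\frac{1}{2\pi}\int_0^{2\pi}\log\max(|P|,|Q|)\,d\theta-\log|Q(0)|$, and spurious pole--zero pairs near the origin make $-\log|Q(0)|$ arbitrarily large.

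The paper's route has three ingredients.

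(i) No averaging over $m$ is needed: $s_{n,m}\le|H_{n,m}|^{1/m}$ for every $(n,m)$. So with $q>\varrho$, $m=\lceil r^q\rceil$ and $B=r(1+1/m)$, your hypothesis gives $s_{n,m}\le m^{-n/\varrho}\le B^{-n}$ for all $n\ge m$ and large $r$.

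(ii) It uses the row $(R_{n,m-1})_{n\ge m}$ for this fixed $m$, not the diagonal. Then \Cref{t:prokhorov} shows that $f$ has at most $m-1$ poles in $|z|<B$, so $N(r,f)\le m\log(r/\rho)$ comes from $f$ itself rather than from the approximants.

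(iii) It bounds only proximity functions. It telescopes $R_{\nu,m-1}=R_{m,m-1}+\sum_k(R_{k+1,m-1}-R_{k,m-1})$ and bounds each term pointwise by $a_k M_r(V_k)/|V_k|$, where $V_k=Q_{k,m-1}Q_{k+1,m-1}$ and $a_k$ decays geometrically via $s_{k+1,m}$ and $s_{k+2,m}$. It then applies \Cref{l:logpoly}, whose bound $d\log(1+\sum_k a_k^{1/d})$ does not depend on where the denominators vanish, and passes to $f$ by Fatou's lemma.

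This is \Cref{t:Tbound}. It yields $T(r,f)=O(m(\log r+\log m))=O(r^q\log r)$, with the factor $1/(1-r/B)=m+1$ costing only $O(m\log m)$.
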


\section{Preliminary remarks}

\subsection{Kronecker's theorem and Hadamard's theorem}

Kronecker's theorem \cite{kronecker1881} characterizes rational functions by the vanishing of Hankel determinants. The following is a quantitative form \cite{aptekarev2011}.
\begin{theorem}[Kronecker, 1881]
	Let $N, m\in\N$. $H_{n,m} = 0$ for every $n \ge N$ if and only if $f = P/Q$ where $\deg P < N$, $\deg Q < m$ and $Q(0) \neq 0$.
\end{theorem}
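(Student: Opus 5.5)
We prove the two directions of Kronecker's theorem separately, working in the formal power series ring $\C[[z]]$ and using the convention $c_k = 0$ for $k<0$. The whole argument reduces to one observation: $H_{n,m}$ is the determinant of an $m\times m$ Toeplitz block whose columns are shifted windows of the coefficient sequence. A linear relation among those columns is the same thing as a polynomial $Q$ of degree $<m$ for which the coefficients of $Qf$ vanish in a window of indices.

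\emph{Sufficiency.} Suppose $f=P/Q$ with $\deg P<N$, $\deg Q<m$, $Q(0)\ne 0$. Write $Q(z)=\sum_{j=0}^{m-1} q_j z^j$, so $(q_0,\dots,q_{m-1})\neq 0$. From $Qf=P$ we get $\sum_{j=0}^{m-1} q_j c_{k-j}=0$ for every $k\ge N$. Fix $n\ge N$ and look at the rows $i=0,\dots,m-1$ of $(c_{n+i-j})_{i,j}$. They give $\sum_j q_j c_{n+i-j}=0$, since $n+i\ge N$. So the nonzero vector $q$ lies in the kernel of the matrix, and therefore $H_{n,m}=0$.

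\emph{Necessity.} Suppose $H_{n,m}=0$ for all $n\ge N$. The plan is to produce a single nonzero vector $q\in\C^m$ such that $\sum_j q_j c_{k-j}=0$ for all $k\ge N$. Then $Q:=\sum q_j z^j$ satisfies $\deg(Qf)<N$, and we still have to arrange $Q(0)\neq 0$.

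For each $n\ge N$ the matrix $T_n:=(c_{n+i-j})_{0\le i,j<m}$ is singular. The infinite matrix $T:=(c_{k-j})_{k\ge N,\,0\le j<m}$ has $T_n$ as its consecutive $m\times m$ row windows, and we want to show that $T$ itself has a nontrivial kernel. I would run the standard induction on $m$.

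\begin{itemize}
\item If $m=1$, the hypothesis says $c_n=0$ for all $n\ge N$, so $f$ is a polynomial of degree $<N$ and $Q=1$ works.
\item For $m>1$, first suppose $H_{n,m-1}=0$ for all $n\ge N'$, for some $N'$. The induction hypothesis gives $f=P/Q$ with $\deg Q<m-1$ and $\deg P<N'$. Then a degree bookkeeping argument is needed to lower the bound on $\deg P$ back to $N$, and this bookkeeping may require a modified induction.
\item Otherwise $H_{n,m-1}\ne0$ for infinitely many $n$. In that case I would use the Sylvester/Desnanot--Jacobi identity
\[
H_{n,m}^2 = H_{n,m}H_{n,m}\;\text{-type relation: } H_{n,m-1}H_{n,m+1}=H_{n,m}^2-H_{n+1,m}H_{n-1,m}.
\]
With the $H_{\cdot,m}$ vanishing, this propagates the vanishing. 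Equivalently, take an $n_0\ge N$ with $H_{n_0,m-1}\ne0$. Then $T_{n_0}$ has rank exactly $m-1$ and a one-dimensional kernel spanned by some $q$. Singularity of $T_{n_0+1}$, which shares $m-1$ rows with $T_{n_0}$ and those rows already have rank $m-1$, forces the new row to be a combination of the shared rows, so it too annihilates $q$. The kernel therefore persists, and inductively $q$ annihilates all rows with $k\ge n_0$. Extending downward to $k\ge N$ uses the same argument run backwards, using the rows between $N$ and $n_0$.
\end{itemize}

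Finally we need $Q(0)\neq0$. If $q_0=0$, write $Q=z^sQ_1$ with $Q_1(0)\neq 0$. Then $Q_1 f=(Qf)/z^s$ is a polynomial whose degree has dropped by $s$, and $\deg Q_1<m$, so the conclusion holds with $Q_1$ in place of $Q$.

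\textbf{Main obstacle.} The main obstacle is the rank-persistence step in the necessity direction. The proposal relies on $H_{n_0,m-1}\neq0$ to get rank exactly $m-1$, but a singular $m\times m$ window can have rank below $m-1$. That degenerate case has to be handled by the induction on $m$, with careful tracking of the degree of $P$. I would settle it with the Frobenius rank theory of Hankel forms, following \cite{aptekarev2011}.
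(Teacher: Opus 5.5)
The paper gives no proof of this statement; it quotes it from \cite{aptekarev2011}, so your proposal has to stand on its own. Your sufficiency direction is correct, and so is your final reduction from $Q=z^sQ_1$ to $Q_1$. The necessity direction, however, is not established, and both branches of your induction are left open.

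Write $r_k:=(c_k,c_{k-1},\dots,c_{k-m+1})$ for the rows of your matrix $T$, so that $T_n$ consists of $r_n,\dots,r_{n+m-1}$.

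In the branch where $H_{n,m-1}=0$ only for $n\ge N'$, the induction hypothesis gives $\deg P<N'$, and you do not show how to get back to $N$.

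In the other branch, passing from $\ker T_n$ to $\ker T_{n+1}$ requires the shared rows $r_{n+1},\dots,r_{n+m-1}$ to be independent. This follows from $H_{n,m-1}\ne0$ (their last $m-1$ columns form $T_{n,m-1}$) or from $H_{n+1,m-1}\ne0$ (their first $m-1$ columns form $T_{n+1,m-1}$). Your hypothesis provides this only at $n=n_0$. Knowing that $H_{n,m-1}\ne0$ for infinitely many $n$ does not exclude $H_{n,m-1}=H_{n+1,m-1}=0$ at some later $n$, and then the chain breaks. The same objection applies to the downward extension from $n_0$ to $N$.

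Note also that the instance of the Desnanot--Jacobi identity you display, centred at $m$, only gives $H_{n,m-1}H_{n,m+1}=0$ for $n\ge N+1$, which propagates nothing.

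The missing idea is to apply the same identity one level down. The relation $H_{n,m}H_{n,m-2}=H_{n,m-1}^2-H_{n+1,m-1}H_{n-1,m-1}$ holds for all $n$, with $H_{n,0}=1$ and $c_k=0$ for $k<0$. Put $x_n:=H_{n,m-1}$; the hypothesis then gives $x_n^2=x_{n+1}x_{n-1}$ for $n\ge N$.
\begin{itemize}
\item If $x_N=0$, then $x_{N+1}^2=x_{N+2}x_N=0$, and inductively $x_n=0$ for every $n\ge N$.
\item If $x_N\ne0$, then inductively $x_n\ne0$ for every $n\ge N$.
\end{itemize}
Thus the case split happens at the single index $N$.

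In the first case the induction hypothesis applies with the same $N$. It gives a nonzero $Q$ with $\deg Q<m-1$ and $\deg(Qf)<N$, so no degree bookkeeping is needed.

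In the second case each $T_n$ with $n\ge N$ is singular and has a nonsingular $(m-1)\times(m-1)$ block, so it has rank exactly $m-1$. Consecutive windows share $m-1$ independent rows, hence $\ker T_n=\ker T_{n+1}$ for all $n\ge N$. A generator of $\ker T_N$ then annihilates every $r_k$ with $k\ge N$, which is exactly $\deg(Qf)<N$, and no downward extension is needed.

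With this dichotomy inserted, and your normalization $Q(0)\ne0$ at the end, the argument is complete.
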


\begin{definition}
	For $m\in\N$, let $R_m$, called the \emph{$m$th radius of meromorphy}, be the radius of the largest disk centered at the origin in which $f$ admits a meromorphic continuation with at most $m$ nonzero poles (counted with multiplicity).
\end{definition}

Define
\begin{equation}\label{e:ell}
	\ell_m := \limsup_{n\to\infty} \sqrt[n]{|H_{n,m}|}.
\end{equation}
By the usual convention on empty determinants, $H_{n,0} = 1$ and thus $\ell_0 = 1$.

The $0$th radius of meromorphy $R_0$ is simply called the \emph{radius of convergence} of $f$. It was first given explicitly by Cauchy \cite{cauchy1821}, and later rediscovered by Riemann \cite{laugwitz1994} and Hadamard \cite{hadamard1888,hadamard1892}, who popularized the formula. It is now known as the \emph{Cauchy--Hadamard theorem}.
\begin{theorem}[Cauchy, 1821]\label{t:cauchy}
	The radius of convergence is given by
	\[
	R_0 = \inv{\ell_1} = \inv{\limsup\limits_{n\to\infty} \sqrt[n]{|c_n|}}.
	\]
\end{theorem}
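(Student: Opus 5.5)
We prove the Cauchy--Hadamard formula $R_0 = 1/\ell_1$ directly from the geometric-series comparison. The convention is $1/0 = \infty$ and $1/\infty = 0$. Note that $H_{n,1} = c_n$, so $\ell_1 = \limsup_{n\to\infty} |c_n|^{1/n}$ and the two expressions in the statement coincide. The plan is to show two inequalities: every $r < 1/\ell_1$ satisfies $r \le R_0$, and every $r > 1/\ell_1$ satisfies $r \ge R_0$.

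\emph{Lower bound.} Fix $0 < r < 1/\ell_1$ and pick $r < s < 1/\ell_1$.
\begin{itemize}
	\item By the definition of $\limsup$, there is $N$ with $|c_n| \le s^{-n}$ for all $n \ge N$.
	\item For $|z| \le r$ this gives $|c_n z^n| \le (r/s)^n$, a summable geometric majorant.
	\item The Weierstrass M-test then shows that the series converges uniformly on $\{|z|\le r\}$.
	\item Its sum is therefore holomorphic on the open disk of radius $r$, with no poles.
\end{itemize}
Hence $R_0 \ge r$. Letting $r \uparrow 1/\ell_1$ gives $R_0 \ge 1/\ell_1$.

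\emph{Upper bound.} Suppose $f$ extends holomorphically (zero poles) to the disk $|z| < R$; we show $R \le 1/\ell_1$. Take any $r < R$. Since the extension is holomorphic near the origin, its Taylor coefficients there are the $c_n$, because the power series and the extension agree near $0$. Cauchy's estimates on the circle $|z| = r$ then give
\[
|c_n| \le M(r)\, r^{-n}, \qquad M(r) := \max_{|z| = r} |f(z)| < \infty .
\]
Taking $n$th roots, $|c_n|^{1/n} \le M(r)^{1/n} r^{-1}$, and since $M(r)^{1/n} \to 1$ we get $\ell_1 \le 1/r$. Letting $r \uparrow R$ yields $R \le 1/\ell_1$, and taking the supremum over admissible $R$ gives $R_0 \le 1/\ell_1$.

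The only delicate point is the identification in the upper bound: $R_0$ is defined via a meromorphic continuation with zero nonzero poles, not via convergence of the series. This is handled by noting that such a continuation is holomorphic on the disk, and that its Taylor coefficients at the origin equal the $c_n$. A pole at the origin is impossible, since $c_n = 0$ for $n < 0$ and $f$ is a power series. Once this is settled, both directions are routine, and the degenerate cases $\ell_1 = 0$ and $\ell_1 = \infty$ follow from the same inequalities.
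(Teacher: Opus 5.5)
Your proof is correct; it is the standard root-test and Cauchy-estimate argument. The paper gives no proof of \Cref{t:cauchy} and simply cites it as classical, so there is no argument of its own to compare with directly.

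The closest thing in the paper is the case $m=0$ of \Cref{t:prokhorov}. There $s_{n+1,1}=|c_{n+1}|$, and the equivalence (a)$\Leftrightarrow$(b), after taking the supremum over $B$, is exactly this statement.

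\begin{itemize}
\item In that proof, (a)$\Rightarrow$(b) with $S\equiv 1$ and \cref{e:bkcauchy} is your upper bound almost word for word.
\item The $m=0$ case of (b)$\Rightarrow$(c) is your M-test lower bound.
\item The paper then reaches (a) through (d) and a Montel compactness argument. That detour is only needed when $m\ge 1$, because the denominators $Q_{n,m}$ change with $n$. For $m=0$ you observe directly that the sum of the series is the continuation, which is more economical.
\end{itemize}

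Your explicit treatment of the definitional point is what justifies the paper's identification of $R_0$ with the radius of convergence. $R_0$ is defined through pole-free continuation rather than convergence, and such a continuation is holomorphic at $0$ with Taylor coefficients $c_n$.

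One trivial nit: if $M(r)=0$, then $M(r)^{1/n}\not\to 1$. In that case $f\equiv 0$ and the bound is immediate.
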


Hadamard \cite{hadamard1889,hadamard1892} generalized the result as follows \cite{aptekarev2011}.
\begin{theorem}[Hadamard, 1889]\label{t:hadamard}
	For $m\in\N$,
	\[
	R_m = \frac{\ell_m}{\ell_{m+1}}.
	\]
	The quotient is interpreted as $\infty$ whenever $\ell_{m+1} = 0$.
\end{theorem}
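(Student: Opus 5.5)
We prove the two inequalities $\ell_{m+1} \le \ell_m/R_m$ and $\ell_{m+1} \ge \ell_m/R_m$ separately, by induction on $m$. The base case $m=0$ is \Cref{t:cauchy}, since $\ell_0 = 1$ and $H_{n,1} = c_n$.

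Along the way we also prove the companion formula $\ell_m = (R_0 R_1 \cdots R_{m-1})^{-1}$. This is what lets the induction go through: it identifies $\ell_m$ with $|z_1\cdots z_m|^{-1}$, where $z_1,\dots,z_m$ are the poles of $f$ in $|z|<R_m$ listed by increasing modulus. When $f$ has fewer poles, we pad with the convention that the corresponding radii are larger.

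\textbf{Upper bound.} Fix $\varepsilon>0$. Let $Q(z)=\prod_{j=1}^m (1-z/z_j)=\sum_{k=0}^m q_k z^k$, so that $g:=Qf$ is holomorphic in $|z|<R_m$. The coefficients $d_n=\sum_k q_k c_{n-k}$ of $g$ then satisfy $|d_n| \le C_\varepsilon (R_m-\varepsilon)^{-n}$ by \Cref{t:cauchy}.

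In $H_{n,m+1}$ we add to the first column the combination of the other columns with weights $q_k/q_0$ (here $q_0=1$). This column operation does not change the determinant, and the new first column has entries $d_{n+i}$. We then perform the same operation on the rows. This turns $H_{n,m+1}$ into a determinant whose corner entry is a coefficient of $Qf$ "twice filtered". Its complementary minor is, up to bounded factors, a Hankel determinant of order $m$.

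Expanding along the first column and bounding each $m\times m$ minor by $C(\ell_m+\varepsilon)^{n}$, using the induction hypothesis and the $\limsup$ definition, gives
\[
\limsup_n |H_{n,m+1}|^{1/n} \le \ell_m/R_m .
\]
The remaining minors are not full Hankel matrices, and they need care. I would handle them by expanding $H_{n,m+1}$ multilinearly in its columns after splitting $f = r + h$. Here $r$ is the sum of the principal parts at $z_1,\dots,z_m$, and $h$ is holomorphic in $|z|<R_m$.

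Any column pattern that uses the coefficients of $r$ in $k$ columns contributes zero when $k>m$, since rank at most $m$ gives vanishing by Kronecker's theorem. When $k\le m$, the pattern contributes at most $C\,n^{c}\,|z_1\cdots z_k|^{-n}(R_m-\varepsilon)^{-n(m+1-k)}$. The largest such bound occurs at $k=m$ and equals $(\ell_m/(R_m-\varepsilon))^n$, up to subexponential factors.

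\textbf{Lower bound.} This is the main obstacle. Suppose $\ell_{m+1} < \ell_m/R_m$, and assume $R_m<\infty$. Otherwise the claim reads $\ell_{m+1}=0$, and this follows from the upper bound.

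For large $n$, I would form the polynomial $Q_n(z)$ of degree at most $m+1$ whose coefficients are the cofactors of the last row of the $(m+2)\times(m+2)$ Hankel matrix. This is the Hadamard/Montessus-type denominator: it annihilates $c_{n},\dots,c_{n+m}$ in the convolution sense. By Sylvester's determinant identity,
\[
H_{n,m+1}^2-H_{n-1,m+1}H_{n+1,m+1}=H_{n,m+2}H_{n,m},
\]
together with its analogues for the cofactors, the consecutive normalized polynomials $Q_n/H_{n,m}$ differ by quantities of size $|H_{n,m+1}|\,|H_{n+1,m+1}|/|H_{n,m}H_{n+1,m}|$, up to bounded factors.

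The assumption, combined with the lower growth of $H_{n,m}$, makes these differences geometrically small along a suitable subsequence. Here the lower growth of $H_{n,m}$ comes from the induction hypothesis, applied through the dominant term $C w^n$ of its rational part, with $C\neq0$ by Kronecker's theorem. Hence the normalized polynomials converge to a polynomial $Q^*$ of degree at most $m+1$ such that $Q^*f$ has Taylor coefficients decaying like $(\ell_{m+1}/\ell_m+\varepsilon)^n$.

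So $Q^*f$ is holomorphic in a disk of radius greater than $R_m$. This means $f$ has at most $m+1$ poles there, which contradicts the definition of $R_{m+1}\ge R_m$. Strictly speaking, we obtain $R_m' > R_m$ with at most $m$ poles once we also use that the $m$ known poles are among the zeros of $Q^*$. Checking that "at most $m$ poles" holds, rather than only "at most $m+1$", is the delicate point. I would first try to settle it by comparing with the already established identity $\ell_m = |z_1\cdots z_m|^{-1}$.

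The companion formula $\ell_{m+1}=\ell_m/R_m$ then closes the induction.
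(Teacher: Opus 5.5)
The paper does not prove this theorem; it only refers to classical sources, so your proposal has to stand on its own. The upper half is fine. Your multilinear expansion after splitting $f=r+h$ is correct, and it actually gives $\ell_{m+1}\le 1/(R_0\cdots R_m)$ directly, with no induction; the row/column manipulation before it can be dropped.

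The lower half has a genuine gap. To divide by $H_{n,m}$ and $H_{n+1,m}$ for all large consecutive $n$, you invoke a dominant term $Cw^n$, $C\neq 0$, coming from the rational part. Such a term exists only if $f$ has exactly $m$ poles in $|z|<R_m$, i.e.\ only if $R_{m-1}<R_m$. The induction hypothesis by itself controls only a $\limsup$. When $R_{m-1}=R_m$ (several poles on one circle, or a nonpolar singularity on $|z|=R_m$), the rational part of $f$ in $|z|<R_m$ has fewer than $m$ poles. Its own $H_{n,m}$ then vanishes identically by Kronecker's theorem, and $H_{n,m}(f)$ can vanish infinitely often: for $f=2/(1-z^2)$ and $m=1$, $H_{n,1}=c_n=0$ for every odd $n$. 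This is also where your list $z_1,\dots,z_m$ of poles in $|z|<R_m$, and the padding convention, break down. Telescoping ``along a suitable subsequence'' does not help, because convergence needs the consecutive differences, over all large $n$, to be summable.

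Moreover, $R_{m-1}=R_m$ is exactly the case where the lower bound has content. If $R_{m-1}<R_m$, your own expansion already gives it: the terms with $k=m$ sum to $\pm H_{n-1,m}(r)\,e_{n+m}$, where $e_\nu$ is the coefficient of $z^\nu$ in $Q^2f$. Here $|H_{n-1,m}(r)|^{1/n}\to\ell_m$ and $\limsup_\nu|e_\nu|^{1/\nu}=1/R_m$, while the terms with $k\le m-1$ are exponentially smaller because $|z_m|<R_m$.

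The polynomial step is also aimed at the wrong level. Degree-$(m+1)$ denominators have extreme coefficients $H_{n,m+1}$ and $H_{n+1,m+1}$, so dividing by $H_{n,m}$ does not produce a convergent sequence. At best they show that $f$ has at most $m+1$ poles in a larger disk, which, as you note yourself, contradicts nothing, and you leave the passage to ``at most $m$'' open. The denominators matched to $R_m$ have degree $m$. The difference of two consecutive normalized ones carries a single factor $H_{n+1,m+1}$ (times a degree-$(m-1)$ denominator) over $H_{n,m}H_{n+1,m}$, not the product you wrote.

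One way to close the gap with the paper's own tools is the following.
\begin{itemize}
\item Let $m'\le m$ be the number of poles in $|z|<R_m$. If $m'\ge1$, then $R_{m'-1}<R_{m'}=R_m$, so the regular case gives $|H_{n,m'}|^{1/n}\to\ell_{m'}$ as a genuine limit. If $m'=0$, use $H_{n,0}=1$.
\item Since $A_{n,m'}$ is the leading $m'\times m'$ block of $A_{n,m+1}$, interlacing of singular values gives $\sigma_1\cdots\sigma_{m'}(A_{n,m+1})\ge|H_{n,m'}|$. Hence $s_{n,m+1}^{\,m+1-m'}\le|H_{n,m+1}|/|H_{n,m'}|$.
\item By the companion formula and $R_{m'}=\cdots=R_m$, we have $\ell_m/R_m=\ell_{m'}R_m^{-(m+1-m')}$. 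So if $\ell_{m+1}<\ell_m/R_m$, then $\limsup_n s_{n,m+1}^{1/n}<1/R_m$.
\item \Cref{t:prokhorov}, (b)$\Rightarrow$(a), then gives a meromorphic continuation with at most $m$ poles to a disk of radius larger than $R_m$, contradicting the definition of $R_m$.
\end{itemize}
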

For a proof, see \cite{hadamard1892,mandelbrojt1927,dienes1957,gragg1973}. Entire functions correspond to $R_0 = \infty$ whereas meromorphic functions on $\C$ are characterized by
\[
R_\infty := \lim_{m\to\infty} R_m = \infty.
\]

\Cref{t:hadamard} shows how information beyond the disk of convergence can be reached via the Hankel determinants.

\subsection{Order of entire and meromorphic functions}

As an alternative to \cref{e:orddef}, one can define the order of an entire function $f$ by
\begin{equation}\label{e:ordM}
	\rho = \limsup_{r\to\infty}\frac{\log^+ \log^+ M(r, f)}{\log r},
\end{equation}
where
\begin{equation}\label{e:maxmod}
	M(r, f) := \sup_{|z| = r} |f(z)| = \sup_{|z| \le r} |f(z)|,
\end{equation}
is the maximum modulus function and $\log^+ (x) := \max(0, \log x)$, with $\log^+ 0 := 0$.

Borel defined the order $\rho$ of $f$ as the least possible maximum of the orders of entire functions $g$ and $h$ satisfying $f = g/h$ \cite{borel1901,borel1903}. This representation misleadingly suggests that the study of meromorphic functions reduces to the study of the entire functions $g$ and $h$, but this approach turned out to be unsuited to the study of meromorphic functions \cite[p.~10]{lehto1982}.

A decisive advance in the theory of meromorphic functions was Nevanlinna's introduction of the \emph{Nevanlinna characteristic} in 1924 \cite{nevanlinna1924}:
\begin{equation}\label{e:nevanT}
	T(r, f) := m(r, f) + N(r, f),
\end{equation}
where
\[
m(r, f) := \inv{2\pi} \int_0^{2\pi} \log^+ |f(re^{i\theta})| \d\theta, \quad N(r, f) := \sum_{0 < |a|\le r} \log\pa{\frac{r}{|a|}},
\]
are the proximity function and the Nevanlinna counting function. Here the $a$ are the poles of $f$ repeated according to multiplicity.

Borel's order of a meromorphic function $f$ can be recovered by the formula \cite{nevanlinna1924}
\begin{equation}\label{e:rho}
	\rho = \limsup_{r\to\infty} \frac{\log^+ T(r, f)}{\log r}.
\end{equation}
This definition coincides with \cref{e:ordM} for entire functions. Informally, $T(r, f)$ plays for meromorphic functions the role that $\log^+ M(r, f)$ plays for entire functions.

\subsection{Survey of partial results}

A first clue comes from the well-known identity \cite[eq.~(2.5)]{pommerenke1969}
\begin{equation}
	H_{n,m}(1/f) = (-1)^{nm} H_{m,n}(f)
\end{equation}
when $f(0) = 1$. Assuming \cref{e:rhoH}, the symmetry in $n$ and $m$ then gives $\rho(1/f) = \rho(f)$, even when $f(0) \neq 1$. This property follows from Borel's and Nevanlinna's definitions.

Let $(z_m)_{m\in\N}$ be a sequence of all the poles of $f$, repeated according to multiplicity, such that the sequence $(|z_m|)_{m\in\N}$ is nondecreasing. Then let
\[
\lambda_\infty := \inf \cbra{\lambda > 0 : \sum_{m = 0}^\infty |z_m|^{-\lambda} < \infty}
\]
be the exponent of convergence. We have \cite[\S 47]{markushevich1965}
\[
\lambda_\infty = \limsup_{m\to\infty} \frac{\log m}{\log |z_m|}.
\]
Note that in fact, $|z_m| = R_m$, the meromorphy radius. Straightforward estimates and \Cref{t:hadamard} show that
\[
\lambda_\infty = \limsup_{m\to\infty} \frac{\log m}{\log \sqrt[m]{R_0 \cdots R_{m - 1}}} = \limsup_{m\to\infty} \frac{m \log m}{\log(1/\ell_m)}
\]
Since $\lambda_\infty$ cannot exceed the order $\rho$ of $f$, the definition \eqref{e:ell} yields
\[
\rho \ge \limsup_{m\to\infty} \limsup_{n\to\infty} \frac{mn \log m}{\log(1/|H_{n,m}|)}.
\]

A different lower bound was obtained by Wilson \cite[Thm.~2]{wilson1935}:
\[
\rho \ge \limsup_{m\to\infty} \frac{m^2 \log m}{\log(1/|H_{n+m-1,m}|)},
\]
for every $n\in\N$. Wilson asks whether the inequality can be improved to an equality. For a special class of meromorphic functions defined by continued fractions, Li \cite[Cor.~1.3]{li1996} obtained equality, but the general question was answered in the negative by Edrei \cite[Thm.~IV]{edrei1940}.

Applying Wiman--Valiron theory to the generating functions of the Hankel determinants, Edrei found a formula for the order of an entire function $f$ in terms of the determinants \cite[Rmk.~1]{edrei1980}.
\begin{theorem}[Edrei, 1980]\label{t:edrei}
	If $f$ is entire and of order $\rho$, then
	\begin{equation}\label{e:edrei}
		\rho = m \limsup_{n\to\infty} \frac{n \log n}{\log(1/|H_{n,m}|)},
	\end{equation}
	for any $m\ge 1$.
\end{theorem}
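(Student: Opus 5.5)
The plan is to prove the two inequalities in \cref{e:edrei} separately. Fix $m\ge 1$ and write $\sigma_m := \limsup_{n\to\infty} n\log n/\log(1/|H_{n,m}|)$. The inequality $m\sigma_m\le\rho$ should follow from \Cref{t:poincaré} and a direct estimate of the determinant. The inequality $m\sigma_m\ge\rho$ is the hard part, and for it I would induct on $m$ using the Sylvester (Desnanot--Jacobi) identity for Hankel determinants.

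\emph{Upper bound, $m\sigma_m\le\rho$.} Let $\varepsilon>0$. By \Cref{t:poincaré}, $|c_k|\le k^{-k/(\rho+\varepsilon)}$ for all large $k$. Expand $H_{n,m}$ as a sum over permutations $\pi$ of $\{0,\dots,m-1\}$. Each term is a product $\prod_i c_{k_i}$ with $k_i=n+i-\pi(i)\in[n-m+1,n+m-1]$. The indices satisfy $\sum_i k_i=mn$, because $\sum_i (i-\pi(i))=0$. Every such $k_i$ is large once $n$ is large, so the Poincaré bound applies to all of them. Since $x\mapsto x\log x$ is convex, $\sum_i k_i\log k_i\ge mn\log n$. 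Hence $|H_{n,m}|\le m!\,\exp\bigl(-mn\log n/(\rho+\varepsilon)\bigr)$ for all large $n$. This gives $m\sigma_m\le\rho+\varepsilon$, and then $m\sigma_m\le\rho$. (If $\rho=\infty$ there is nothing to prove.)

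\emph{Lower bound, $m\sigma_m\ge\rho$.} I would induct on $m$. The case $m=1$ is \Cref{t:poincaré}, since $H_{n,1}=c_n$. For the inductive step I would use the identity
\[
H_{n,m}^2-H_{n+1,m}H_{n-1,m}=H_{n,m+1}H_{n,m-1}.
\]
Suppose, for contradiction, that $(m+1)\sigma_{m+1}<\rho-2\varepsilon$. Then $|H_{n,m+1}|\le \exp\bigl(-(m+1)n\log n/(\rho-\varepsilon)\bigr)$ for all large $n$. By the upper bound already proved, $|H_{n,m-1}|\le \exp\bigl(-(m-1)n\log n/(\rho+\varepsilon)\bigr)$. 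So the right-hand side is at most $\exp\bigl(-2mn\log n/\rho-\delta n\log n\bigr)$ for some $\delta>0$ depending on $\varepsilon$. By the induction hypothesis, $|H_{n,m}|\ge \exp\bigl(-mn\log n/(\rho-\varepsilon')\bigr)$ for infinitely many $n$, with $\varepsilon'$ small compared with $\delta$. At such $n$, $H_{n,m}^2$ dominates the right-hand side, so $H_{n+1,m}H_{n-1,m}=H_{n,m}^2\,(1+o(1))$ with an error that is super-exponentially small. Setting $u_n:=\log|H_{n,m}|$, this says the second difference $u_{n+1}-2u_n+u_{n-1}$ is essentially zero at such $n$. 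The aim is to propagate this to neighbouring indices, by applying the same identity at $n\pm1$, and so to obtain a long run of indices on which $u$ is almost affine in $n$. This contradicts the concave profile $u_n\approx -mn\log n/\rho$, whose second differences are of size $-m/(\rho n)$ and accumulate to a macroscopic discrepancy over a run of length comparable to $n$.

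\emph{Main obstacle.} The difficulty is the propagation step. The lower bound on $|H_{n,m}|$ is only known along a sparse subsequence, so the dominance of $H_{n,m}^2$ must be shown to persist on a window long enough for the concavity deficit to accumulate. My first attempt would be to choose the subsequence more carefully: take $n$ to be a vertex of the least concave majorant of $n\mapsto u_n$. At such vertices the neighbours $u_{n\pm j}$ are controlled from above by the majorant, and the identity then controls them from below as well. If this bookkeeping fails, the fallback is Edrei's own route: apply Wiman--Valiron theory to the generating series $\sum_n H_{n,m}z^n$. One compares its maximal term with $M(r,f)^m$ near the central index of $f$, which turns the growth of that series into the order of $f$, and then applies the coefficient formula of \Cref{t:poincaré} to the sequence $(H_{n,m})_n$.
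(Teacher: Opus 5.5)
Your upper bound $m\sigma_m\le\rho$ is correct. The permutation expansion, $\sum_i k_i=mn$ and convexity of $x\log x$ give $|H_{n,m}|\le m!\,e^{-mn\log n/(\rho+\varepsilon)}$. This is a legitimate alternative to the paper's use of \cref{e:HMbound} at an optimized radius.

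The lower bound, however, is not proved, and the gap is more than bookkeeping: the contradiction you plan to reach at the end of the propagation step does not exist. You have no profile $u_n\approx -mn\log n/\rho$. You only know $u_n\le -mn\log n/(\rho+\varepsilon)+O(1)$ for all $n$, and $u_n\ge -mn\log n/(\rho-\varepsilon')$ along a sparse sequence. The room between these envelopes at $n$ is of order $n\log n$. The curvature of $x\log x$ over a window of length comparable to $n$ produces a discrepancy of order $n$ only. For instance, take the line through $\bigl(n,-mn\log n/(\rho-\varepsilon')\bigr)$ whose slope is the derivative of the upper envelope at $n$. It stays below the upper envelope on $[n/2,2n]$ for all large $n$, because the tangent-line deficit of $\frac{m}{\rho+\varepsilon}x\log x$ there is at most $\frac{m}{\rho+\varepsilon}(2\log 2-1)n$. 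So an almost affine run of length comparable to $n$ through a point of your subsequence is consistent with everything you know. Local control near a vertex yields no contradiction unless the run is shown to be unbounded. The Wiman--Valiron fallback is a pointer to Edrei's proof, not an argument.

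The missing idea is a reason why dominance persists on a whole half-line, and your framework can supply it.
\begin{itemize}
\item Let $\phi(x):=-c\,x\log x$ with $c>m/\rho$ and $|H_{n,m+1}H_{n,m-1}|\le e^{2\phi(n)}$ for $n\ge n_0$; this is what your computation gives. For $\rho=\infty$ your bound on $H_{n,m-1}$ is vacuous, but $|H_{n,m-1}|\le 1$ for large $n$ suffices there, with any $c>0$.
\item Infinitely many vertices $v$ of the least concave majorant satisfy $u_v\ge\phi(v)+\gamma v\log v$ for some $\gamma>0$. Indeed, if both endpoints of the segment containing a point of your subsequence lay below $-(m/\rho+\eta)x\log x$, then by convexity of $x\log x$ so would that point.
\item Fix such a $v$, let $\ell$ be a supporting line of the majorant at $v$, and put $d_n:=\ell(n)-u_n\ge 0$, so $d_v=0$. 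The identity gives $|d_{n+1}-2d_n+d_{n-1}|\le 2e^{-2(\ell-\phi)(n)+2d_n}$ whenever that exponential is at most $\frac12$. In particular $0\le d_{v\pm1}\le 2e^{-2\gamma v\log v}$.
\item The key point is that $\ell-\phi$ is convex. Hence it stays $\ge(\ell-\phi)(v)$ on all of $[v,\infty)$ or on all of $[n_0,v]$. On that half-line a bootstrap shows that $d$ grows at most linearly in the distance to $v$, with a coefficient that is super-exponentially small in $v$.
\item In the first case $u$ has an affine lower bound on $[v,\infty)$, contradicting $u_n/n\to-\infty$. If the second case occurs for infinitely many $v$, then $u$ is within $o(1)$ of an affine function on $[n_0,v]$ for arbitrarily large $v$. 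Hence $u$ is exactly affine on $[n_0,\infty)$, again a contradiction.
\end{itemize}
Completed this way, your induction would give an elementary proof of Edrei's formula.

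The paper instead proves the lower bound for each fixed $m$ directly, via \Cref{t:rhoEL} with $L$ the identity. It uses \cref{e:sHbound} to pass from $|H_{n,m}|$ to the least singular value. Then \Cref{t:Tbound}, built on Prokhorov's approximants, turns $s_{n,m}\le B^{-n}$ for $n\ge N$, with $B=r(1+1/N)$ and $N\approx r^q$, into $T(r,f)=O(N\log r)$. That route needs no induction on $m$, and it also yields the singular-value formula and general scales $L$.
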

\Cref{t:edrei} reduces to \Cref{t:poincaré} for $m = 1$. We record two informal remarks:
\begin{itemize}
    \item \Cref{t:edrei} suggests that in the doubly indexed sequence $(|H_{n,m}|)_{n,m\in\N}$, the growth in $n$ encodes the growth of the function $|f|$. The growth in $m$, however, encodes the distribution of the poles of $f$, in view of \Cref{t:hadamard}. These two behaviors correspond precisely to the two components $m(r, f)$ and $N(r, f)$ of the Nevanlinna characteristic \eqref{e:nevanT}.
    \item Formula \cref{e:rhoH} does not immediately reduce to \Cref{t:poincaré} or \Cref{t:edrei} for entire functions. This again suggests that the study of meromorphic functions is significantly more delicate than that of entire functions.
\end{itemize}

For functions with finitely many poles, Reczek expressed order and type via Hankel determinant ratios \cite[Prop.~3.1]{reczek1984}, and later via coefficients of Padé numerators \cite[Thm.~1]{reczek1990}.

\subsection{Proof of the first inequality}

The first inequality
\begin{equation}\label{e:ineq1}
	\rho \ge p := \limsup_{n,m\to\infty} \frac{nm\log \min(n, m)}{\log(1/|H_{n,m}|)}.
\end{equation}
quickly follows from a result of Pommerenke \cite[Thm.~4]{pommerenke1969}.\footnote{There is an oversight in the reference: the denominator should be $r^{mn}$, not $r^{m+n}$.}
\begin{theorem}[Pommerenke, 1969]\label{t:pommerenke}
	If $f$ satisfies $f(0) = 1$, then for every $0 < r < R_\infty$,
	\begin{equation}\label{e:pomme}
		|H_{n,m}| \le \frac{e^{(n+m)T(r, f)}}{r^{nm}}.
	\end{equation} 
\end{theorem}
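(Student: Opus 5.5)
The plan is to write $H_{n,m}$ as a single Taylor coefficient of a function of $m$ complex variables, and then bound that coefficient by a Cauchy-type estimate on the torus $|z_1| = \cdots = |z_m| = r$. The factor $r^{-nm}$ should come from the Cauchy estimate. The factor $e^{(n+m)T(r,f)}$ should come from a Nevanlinna-type majorant of $\log|f|$, in which the proximity part $m(r,f)$ enters with weight $n$ and the counting part $N(r,f)$ with weight $m$. Roughly $n$ comes from the Cauchy estimate and $m$ from the Vandermonde factor.

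First, by multilinearity of the determinant in its columns,
\[
H_{n,m} = \Bigl[\textstyle\prod_{i=0}^{m-1} z_i^{\,n+i}\Bigr]\ \Bigl(\prod_{i=0}^{m-1} f(z_i)\Bigr)\det\bigl(z_i^{\,j}\bigr)_{0\le i,j<m}.
\]
Indeed, expanding the Vandermonde determinant as $\sum_\sigma \operatorname{sgn}\sigma \prod_i z_i^{\sigma(i)}$ and extracting the coefficient of $z_i^{n+i}$ in $f(z_i)z_i^{\sigma(i)}$ gives $c_{n+i-\sigma(i)}$. If $f$ had no poles in $|z|\le r$, Cauchy's estimate on the torus would immediately give
\[
|H_{n,m}| \le r^{-nm - m(m-1)/2}\, M(r,f)^m \sup_{|z_i|=r}|\Delta(z)| \le r^{-nm}\, M(r,f)^m\, m^{m/2}
\]
by Hadamard's inequality for the Vandermonde determinant. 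This is already of the right shape, except that it uses the maximum modulus $M(r,f)$ rather than $T$, and it only handles the pole-free case.

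Second, to bring in $T(r,f)$ and to allow poles, I would replace the crude sup-norm step by subharmonicity. Let $a_1,\dots,a_k$ be the poles in $0<|a|<r$, and let $b_a(z) = r(z-a)/(r^2-\bar a z)$ be the disk Blaschke factors. By the Poisson--Jensen formula,
\[
\log|f(z)| \le u(z) := \int \mathrm{Poisson}\cdot\log^+|f(re^{i\theta})| - \sum_a \log|b_a(z)|.
\]
On $|z|=r$ we have $|b_a|=1$, and at the origin $u(0) = m(r,f)+N(r,f) = T(r,f)$. My plan is to multiply by the holomorphic function $B(z)=\prod_a b_a(z)$, which clears the poles. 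I would then exploit the fact that Hankel determinants transform in a controlled way under multiplication by a factor with $k$ zeros. The idea is to absorb $B(z_i)$ into an enlarged $(m+k)$-variable Vandermonde, or equivalently to use Sylvester-type identities relating $H_{n,m}(f)$ to minors of $Bf$. After this, one applies the mean-value inequality for the plurisubharmonic function $\log|\Psi|$, with $\Psi = \prod_i (Bf)(z_i)\,\Delta(z)$, centered appropriately.

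I would choose the centering so that each of the $n$ "Cauchy" powers picks up $m(r,f)$ and each of the $m$ rows picks up a $\log(r/|a|)$ from the Blaschke denominators, giving in total at most $(n+m)T(r,f)$. Along the way I would use $f(0)=1$, so that $\log|f(0)|=0$ and Jensen's formula gives $N(r,1/f)\le T(r,f)$ with no additive constant. The same normalization should also remove the $m^{m/2}$-type constants, for example via a Jensen step in each variable rather than a sup bound.

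I expect the main obstacle to be exactly the passage from $f$ to the pole-free function $Bf$ without losing more than $e^{mN(r,f)}$. This bookkeeping, making the weights $n$ and $m$ land on the correct Nevanlinna components, is the delicate part, and it is where I would follow Pommerenke's original argument most closely. A fallback is to first prove the weaker bound $|H_{n,m}|\le C_m\, r^{-nm}e^{(n+m)T(2r,f)}$ and then sharpen it.
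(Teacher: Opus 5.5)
\textbf{There is a genuine gap: the second step is not supplied, and the tools you name for it cannot supply it.} Your first step is correct. The identity $H_{n,m}=[\prod_i z_i^{n+i}]\prod_i f(z_i)\det(z_i^{j})$ holds by multilinearity in the rows, and the torus estimate gives $|H_{n,m}|\le m^{m/2}M(r,f)^m r^{-nm}$ when $f$ is holomorphic on $|z|\le r$. But that is an Edrei-type bound like \eqref{e:HMbound}, not the statement. The second step carries the whole content of the theorem, and the problems are concrete:
\begin{itemize}
\item Once $f$ has poles in $|z|<r$, the torus integral at radius $r$ returns Laurent coefficients of the annulus expansion, not the $c_k$. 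So the Cauchy-estimate framework must be replaced, not patched.
\item Multiplying by the Blaschke product $B$ alone keeps $|Bf|=|f|$ on $|z|=r$. It can therefore only ever yield $M(r,f)$ or an $L^2$ norm of $f$, never $e^{m(r,f)}$.
\item A sub-mean-value (Jensen) inequality for $\log|\Psi|$ controls only the value of $\Psi$ at the centre, not a non-constant Taylor coefficient. For $\Psi(z)=e^{\lambda z}$, the mean of $\log|\Psi|$ on $|z|=1$ is $0$, while the coefficient of $z$ is $\lambda$.
\item The bookkeeping you aim for ($m(r,f)$ weighted by $n$, $N(r,f)$ by $m$) would prove false inequalities. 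For $f=e^{\lambda z}$, $n=m=1$ and $\lambda r=\pi$, it gives $|c_1|\le e^{T(r,f)}/r$, i.e.\ $\pi\le e$. For $f=1/(1-z/a)$ with $r>2|a|$ (so $m(r,f)=0$), $m=1$ and $n\ge 2$, it gives $|a|^{-n}\le r^{1-n}|a|^{-1}$.
\end{itemize}
In the true bound, the full factor $e^{T(r,f)}$ is paid by each of the $n+m$ columns of a determinant. The zeros of $f$ (your $N(r,1/f)$ remark) play no role.

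\textbf{The missing idea is Pommerenke's argument,} which the paper points to via Parseval's identity, Hadamard's inequality and a quotient $f=h/g$. Your majorant satisfies $u=T(r,f)-\log|g|$ for the $g$ below. The point is to use $g$ as an actual holomorphic factor, not $\log|f|\le u$ as a majorant.
\begin{itemize}
\item \emph{Factorization.} Put $g:=B\phi/(B\phi)(0)$ and $h:=fg$, where $\phi$ is the outer function on $|z|<r$ with $|\phi|=e^{-\log^+|f|}$ on $|z|=r$. Since $|B(0)|=e^{-N(r,f)}$ and $|\phi(0)|=e^{-m(r,f)}$, you get $g(0)=1$. On $|z|=r$, both $|g|=e^{T(r,f)-\log^+|f|}$ and $|h|=e^{T(r,f)}\min(|f|,1)$ are at most $e^{T(r,f)}$. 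Take $r$ off the moduli of the poles and use continuity of $T$.
\item \emph{Linear algebra on coefficient vectors.} Up to sign, $H_{n,m}(f)$ is the determinant of the $(n+m)\times(n+m)$ matrix whose columns are the coefficient vectors, truncated below degree $n+m$, of $z^jf$ ($j<m$) and of $z^k$ ($k<n$).
\item \emph{Change of columns.} Let $g_k$ be the Taylor coefficients of $g$, with $g_k=0$ for $k<0$. Left multiplication by the lower triangular Toeplitz matrix $(g_{i-j})_{0\le i,j<n+m}$, whose determinant is $g(0)^{n+m}=1$, turns these columns into the truncated coefficient vectors of $z^jh$ and $z^kg$.
\item \emph{Conclusion.} After the rescaling $z\mapsto rz$, which multiplies $H_{n,m}$ by $r^{nm}$, Hadamard's inequality and Parseval give $r^{nm}|H_{n,m}|\le\|h\|^m\|g\|^n\le e^{(n+m)T(r,f)}$, with $L^2$ norms on $|z|=r$.
\end{itemize}
This needs no Vandermonde, no pole-by-pole accounting, and no use of $f(0)=1$ beyond holomorphy at $0$.
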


The normalization at the origin can be omitted in \cref{e:pomme}: apply \cite[Thm.~3]{pommerenke1969} to a quotient $f = h/g$ of bounded holomorphic functions with $g(0) = 1$ and $\max(\abs{h},\abs{g}) \le e^{T(r, f)}$ on $\abs{z} = r$.

Inequality \cref{e:pomme} is sharp, but significantly weaker than the analogous inequality for $0 < r < R_0$ \cite[Lem.~1]{edrei1940}
\begin{equation}\label{e:HMbound}
	|H_{n,m}| \le m! \frac{M(r, f)^m}{r^{nm}},
\end{equation}
which reduces to Cauchy's inequality when $m = 1$.

\begin{proof}[of \cref{e:ineq1}]
	If $f$ is of order $\rho$ on $\C$, then $R_\infty = \infty$ and $T(r,f) \le C r^\varrho$ for any $\varrho > \rho$ and some $C > 0$. The value of $r$ that minimizes the function $r \mapsto e^{(n+m) C r^{\varrho}}/r^{nm}$ is
	\[
	r = \pa{\frac{1}{C\varrho} \frac{nm}{n + m}}^{\inv\varrho}.
	\]
	Therefore
	\[
	|H_{n,m}| \le \pa{e \varrho C \pa{\inv n + \inv m}}^{\frac{nm}{\varrho}} \le \pa{\frac{2e \varrho C}{\min(n, m)}}^{\frac{nm}{\varrho}}
	\]
	It follows that $p \le \varrho \to \rho$.
\end{proof}

\subsection{Overview and methodology}

Motivated by some of the results presented above, the author conjectured the existence of a formula expressing meromorphic order in terms of Taylor coefficients. The precise formula \eqref{e:rhoH} emerged from LLM-assisted computations on specific examples, and the author then established the first inequality \eqref{e:ineq1}. The relationship between Hankel determinants and \emph{Padé approximants} \cite{aptekarev2011} suggested a route to the reverse inequality: approximate $f$, establish a bound involving the determinants, and pass to the limit. Neither the author's attempts nor subsequent attempts with LLMs produced a proof using this approach.

We later revisited the problem with a newer model, and it obtained a proof of the reverse inequality using instead the rational approximants recently introduced by Prokhorov \cite{prokhorov2015}. These approximants, introduced in \Cref{s:prokhorov}, minimize a Euclidean norm. The use of the $2$-norm seems retrospectively natural in light of Pommerenke's proof of \Cref{t:pommerenke} \cite{pommerenke1969}, which relies on \emph{Parseval's identity} and \emph{Hadamard's inequality}.

The initial proof was short but crude and offered little insight. With further LLM assistance, the author developed the surrounding theory. In \Cref{s:cv}, we establish a convergence theorem for Prokhorov's approximants and use it to bound the Nevanlinna characteristic in terms of the least \emph{singular values} of Hankel matrices. In \Cref{s:order}, we introduce a generalized notion of order, motivated by examples and known results, and apply this bound to obtain coefficient formulas for entire and meromorphic functions. We prove the maximality of the associated growth class in \Cref{s:max}.

\section{Least singular value of the Hankel matrix}

\subsection{Prokhorov's rational approximants}\label{s:prokhorov}

In this section, assume that $f(z) = \sum_{k=0}^\infty c_k z^k$ converges and put $c_k = 0$ for $k < 0$. Let $s_{n,m}$ be the least singular value of $A_{n,m} := (c_{n+i-j})_{0 \le i, j < m}$. Since the product of the singular values equals the modulus of the determinant, we have the inequality
\begin{equation}\label{e:sHbound}
	s_{n, m} \le \abs{\det A_{n, m}}^{\inv m} = \abs{H_{n, m}}^{\inv m}
\end{equation}

Let $Q = (q_0, \dots, q_m)$ be a tuple of $m + 1$ complex numbers associated with a polynomial of degree at most $m$
\[
Q(z) := \sum_{k = 0}^m q_k z^k.
\]
By the Courant--Fischer theorem \cite[Thm.~7.3.8]{horn2012}, we have
\begin{equation}\label{e:smin}
	s_{n+1, m+1} = \min_{\norm{Q}_2 = 1} \norm{A_{n+1, m+1} Q}_2,
\end{equation}
where $\norm{}_2$ denotes the Euclidean norm. Let $Q_{n,m}$ be a tuple attaining the minimum and define the polynomial $P_{n,m}$ by the partial Taylor sum
\begin{equation}\label{e:pnm}
	Q_{n,m}(z) f(z) = P_{n,m}(z) + O(z^{n+1}), \quad z \to 0.
\end{equation}
Therefore, $P_{n,m}$ and $Q_{n,m}$ have degree at most $n$ and $m$, respectively. \emph{Prokhorov's approximant} is the rational function $R_{n,m} := P_{n,m}/Q_{n,m}$ approximating $f$ \cite{prokhorov2015}.

\subsection{A convergence theorem}\label{s:cv}

\begin{theorem}\label{t:prokhorov}
	Fix $B > 0$, $N \ge m \ge 0$, and any sequence $(R_{n,m})_{n \ge N}$ of Prokhorov approximants of $f$. The following are equivalent:
	\begin{alphabetize}
		\item The germ of $f$ at $0$ extends meromorphically to $\abs{z} < B$, with at most $m$ poles counted with multiplicity.
		\item The least singular values satisfy
		\[
		\limsup_{n \to \infty} \sqrt[n]{s_{n+1, m+1}} \le \inv B.
		\]
		\item For every $0 < r < B$,
		\[
		m(r, R_{n,m} - R_{k,m}) \to_{n,k \to \infty} 0.
		\]
		\item For every $0 < r < B$,
		\[
		\sup_{n \ge N} m(r, R_{n,m}) < \infty.
		\]
	\end{alphabetize}
	Under these conditions, if $m \ge 1$, then, for every $n \ge N$ and $0 < r < B$,
	\begin{equation}\label{e:Rerror}
		\begin{aligned}
			m(r, f - R_{n,m})
			&\le 2m \log \pa{1 + 2 \sqrt{2(r + r^{-1} + 2)} \sum_{k \ge n} \pa{r^{k+1} s_{k+1, m+1}}^{\inv{2m}}}.
		\end{aligned}
	\end{equation}
\end{theorem}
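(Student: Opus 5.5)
We prove the cycle (a)$\Rightarrow$(b)$\Rightarrow$(c)$\Rightarrow$(d)$\Rightarrow$(a) and obtain \eqref{e:Rerror} along the way, as a quantitative form of (b)$\Rightarrow$(c).

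\emph{(a)$\Rightarrow$(b).} Let $D$ be the monic polynomial of degree $\le m$ whose zeros are the poles of $f$ in $\abs{z}<B$. Then $Df$ is holomorphic in $\abs{z}<B$, so its coefficients satisfy $\abs{[z^k](Df)} \le C_\rho \rho^{-k}$ for every $\rho<B$. The entries of $A_{n+1,m+1}D$ are exactly the coefficients of $z^{n+1},\dots,z^{n+m+1}$ in $Df$. Hence \eqref{e:smin} gives $s_{n+1,m+1}\le C\rho^{-n}$, and (b) follows after letting $\rho\to B$. This is the Courant--Fischer analogue of the easy half of \Cref{t:hadamard}.

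\emph{(b)$\Rightarrow$(c), with \eqref{e:Rerror}.} Normalize $\norm{Q_{n,m}}_2=1$. The key identity is
\[
Q_{n+1,m}Q_{n,m}(R_{n+1,m}-R_{n,m}) = P_{n+1,m}Q_{n,m}-P_{n,m}Q_{n+1,m}.
\]
The right side is a polynomial of degree $\le n+m+1$ that vanishes to order $n+1$ at $0$, because both products agree with $Q_{n,m}Q_{n+1,m}f$ up to $O(z^{n+1})$. It therefore equals $z^{n+1}$ times a polynomial of degree $\le m$. Its coefficients are bilinear in the residuals $A_{n+1,m+1}Q_{n,m}$ (norm $s_{n+1,m+1}$) and $A_{n+2,m+1}Q_{n+1,m}$ (norm $s_{n+2,m+1}$), paired against coefficients of $Q$'s of norm one. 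On $\abs{z}=r$ this bounds the numerator by roughly $r^{n+1}(s_{n+1,m+1}+s_{n+2,m+1})$ times a factor polynomial in $r$. A factor like $\sqrt{2(r+r^{-1}+2)}$ comes from $\sup_{\abs z=r}\abs{Q}\le \sqrt{\sum r^{2k}}$ and $\ell^2$ estimates of this kind.

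The denominator $Q_{n+1,m}Q_{n,m}$ is a polynomial of degree $\le 2m$ and norm comparable to $1$. Jensen-type estimates for polynomials bound $\frac1{2\pi}\int\log^+ \frac{1}{\abs{Q(re^{i\theta})}}$ in terms of the degree. The trick is the elementary inequality $\log^+(xy)\le \log^+ x+\log^+ y$, together with the power $1/(2m)$. Writing $\abs{Q_nQ_{n+1}}\ge\prod_{j\le 2m}\abs{z-\zeta_j}\cdot(\text{const})$, we use
\[
\log^+\frac{\epsilon}{\prod\abs{z-\zeta_j}} \le \sum_j \log^+\frac{\epsilon^{1/(2m)}}{\abs{z-\zeta_j}} \le 2m\log\pa{1+\epsilon^{1/(2m)}\cdots}
\]
after averaging over the circle. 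Telescoping over $k\ge n$ uses subadditivity of $\log^+$ in the form $\log^+\sum a_k\le\log(1+\sum a_k)$ after combining factors. Doing this inside a single logarithm, $\log\prod(1+x_k)\le$ the log of $1+\sum$ (not directly true, so instead we first sum the $2m$-th roots, which is the content of \eqref{e:Rerror}). This gives $m(r,R_{n',m}-R_{n,m})$ bounded by the right side of \eqref{e:Rerror}. Under (b), for $r<B$ the series $\sum_k(r^{k+1}s_{k+1,m+1})^{1/(2m)}$ converges geometrically, which yields (c). The case $m=0$ is direct: the $R_{n,0}$ are multiples of the partial sums, and we use Cauchy--Hadamard, \Cref{t:cauchy}.

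\emph{(c)$\Rightarrow$(d)} is immediate, since $m(r,R_{n,m})\le m(r,R_{N,m})+m(r,R_{n,m}-R_{N,m})+\log 2$.

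\emph{(d)$\Rightarrow$(a).} The functions $R_{n,m}$ have at most $m$ poles, so $N(r,R_{n,m})\le m\log^+(r/\delta)$ once poles near $0$ are controlled, using $Q_{n,m}(0)$ and normalization. Thus $T(r,R_{n,m})$ is uniformly bounded. Montel/normal-family arguments for meromorphic functions with bounded characteristic and at most $m$ poles give: after multiplying by normalized $Q_{n,m}$, the products $P_{n,m}$ are uniformly bounded on compacta of $\abs z<r$, and a subsequence $Q_{n,m}\to Q$, $P_{n,m}\to P$ locally uniformly. Since $Q_{n,m}f-P_{n,m}=O(z^{n+1})$ and all coefficients converge, $Qf=P$ near $0$, so $f=P/Q$ extends with at most $m$ poles. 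The limit in \eqref{e:Rerror} then holds for $f-R_{n,m}$ by Fatou/passing $n'\to\infty$.

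The main obstacle is the lower bound for $\abs{Q_{n,m}Q_{n+1,m}}$ on $\abs z=r$ in integrated $\log^+$ form. This is needed to get the precise constant $2\sqrt{2(r+r^{-1}+2)}$, and also to handle possible degeneration $Q_{n,m}(0)\to0$ in (d)$\Rightarrow$(a). I would first try Jensen's formula applied to the normalized product, combined with a Cauchy--Schwarz bound on its leading and constant coefficients.
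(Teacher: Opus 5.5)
\textbf{There is a genuine gap at the heart of (b)$\Rightarrow$(c) and of \eqref{e:Rerror}.} After telescoping, you must bound $m\bigl(r,\sum_{k=n}^{\nu-1} g_k\bigr)$ with $g_k=R_{k+1,m}-R_{k,m}$. Each $g_k$ is controlled only in the form $|g_k|\le a_{k,m}M_r(V_k)/|V_k|$, and its denominator $V_k=Q_{k,m}Q_{k+1,m}$ has zeros that move with $k$.

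The tools you list treat one term at a time: $\log^+(xy)\le\log^+x+\log^+y$, Jensen's formula for each $V_k$, and a lower bound for the denominator. They give no way to combine terms into a single logarithm:
\begin{itemize}
\item Combining via $\log(1+\sum_k x_k)\le\sum_k\log(1+x_k)$ gives a sum of logarithms.
\item Subadditivity of $\log^+$ over $K$ terms costs $\log K$, and here $K\to\infty$.
\end{itemize}
Neither yields $2m\log\bigl(1+\sum_k a_{k,m}^{1/(2m)}\bigr)$. Your sentence ``we first sum the $2m$-th roots, which is the content of \eqref{e:Rerror}'' assumes exactly what has to be proved.

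The missing idea is \Cref{l:logpoly}. It proceeds in four steps:
\begin{itemize}
\item Write $|V_k(re^{i\theta})|/M_r(V_k)=\prod_{j=1}^{d}|e^{i\theta}-\alpha_{k,j}|$ with $|\alpha_{k,j}|\le1$, reflecting zeros outside the circle. This keeps everything uniform when leading coefficients degenerate, which your bound $|Q_{n,m}Q_{n+1,m}|\ge\mathrm{const}\cdot\prod|z-\zeta_j|$ does not.
\item Apply Hölder across $k$ to get $\prod_{j=1}^d\bigl(\sum_k a_k|e^{i\theta}-\alpha_{k,j}|^{-d}\bigr)^{1/d}$.
\item Pass to $\ell^2$, using $d=2m\ge2$.
\item Prove the sharp estimate \eqref{e:logsquares}: $\frac1{2\pi}\int_0^{2\pi}\log\bigl(1+\sum_k b_k^2/|e^{i\theta}-\alpha_k|^2\bigr)\,d\theta\le2\log\bigl(1+\sum_k b_k\bigr)$.
\end{itemize}
That last inequality is the real work. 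Subharmonicity in each $\alpha_k$ reduces it to $|\alpha_k|=1$. Then take $H(z)=1+\sum_k b_k/(1-\overline{\alpha_k}z)$, which satisfies $\Re H=1+\frac12\sum_k b_k$ on the circle. The Poisson integral of $\Re(1/H)$, Jensen's inequality and Jensen's formula close the argument.

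The step you single out as the main obstacle, the lower bound on $|Q_{n,m}Q_{n+1,m}|$, is actually the easy part. For $\|Q\|_2=1$ you have $M_r(Q)\ge(1+r^{-1})^{-m}$ by \Cref{l:mahler}. Your termwise tools can still recover the qualitative (c), since per-term bounds of order $b\log(e/b)$ with $b=a_{k,m}^{1/(2m)}$ are summable under (b). They cannot give \eqref{e:Rerror}, which is claimed for every $n\ge N$ and needs one logarithm of the whole sum.

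\textbf{A second, fixable problem is in (d)$\Rightarrow$(a).} You route through $N(r,R_{n,m})$ and $T(r,R_{n,m})$, which requires controlling poles near $0$ through $Q_{n,m}(0)$. This fails in general, because $Q_{n,m}(0)$ need not be bounded below. For $f=1/(1-z)$ and $m=2$, the vector of $z(1-z)/\sqrt2$ is a minimizer, and the theorem allows any sequence of minimizers.

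The detour is unnecessary. Since $P_{n,m}=R_{n,m}Q_{n,m}$ is a polynomial,
\[
m(s,P_{n,m})\le m(s,R_{n,m})+\log\max_{|z|=s}|Q_{n,m}|,
\]
and the right side is bounded by (d) and $\|Q_{n,m}\|_2=1$. The Poisson integral of the subharmonic function $\log^+|P_{n,m}|$ then gives $\log^+|P_{n,m}(z)|\le\frac{s+r}{s-r}\,m(s,P_{n,m})$ for $|z|\le r<s$. Montel's theorem now applies to holomorphic functions, and the rest of your argument (coefficient matching, then Fatou for the error bound) matches the paper.
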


The equivalence of (a) and (b) follows from Prokhorov's formula \cite[Thm.~4.1]{prokhorov2015}. Prokhorov also proved $\sigma$-almost uniform convergence \cite[Thm.~4.2]{prokhorov2015}. The present theorem gives a distinct characterization in terms of the proximity function, together with the explicit error bound \cref{e:Rerror}.

For a nonzero polynomial $V$, let
\begin{equation}\label{e:Mr}
	M_r(V) := \exp \pa{\inv{2\pi} \int_0^{2\pi} \log \abs{V(re^{i\theta})} \d\theta}
\end{equation}
be its \emph{Mahler measure} on the circle $\abs{z} = r$
\cite[Prop.~1.5]{brunault2020}. We first record the logarithmic estimate used below.
\begin{lemma}\label{l:logpoly}
	Let $r > 0$ and let $d \ge 2$ and $K \ge 1$ be integers. For each $1 \le k \le K$, let $a_k \ge 0$, let $V_k$ be a nonzero polynomial of degree at most $d$, and let $g_k$ be meromorphic in a neighborhood of the circle $\abs{z} = r$. Assume that, for every $1 \le k \le K$ and every $z$ with $\abs{z} = r$ and $V_k(z) \ne 0$,
	\begin{equation}\label{e:logpolyhyp}
		\abs{g_k(z)} \le a_k \frac{M_r(V_k)}{\abs{V_k(z)}}.
	\end{equation}
	Then
	\begin{equation}\label{e:logpoly}
		m \pa{r, \sum_{k = 1}^K g_k} \le d \log \pa{1 + \sum_{k = 1}^K a_k^{1/d}}.
	\end{equation}
\end{lemma}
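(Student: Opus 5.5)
The plan is to reduce \eqref{e:logpoly} to one averaging inequality on the circle, in which the Mahler measure acts as the normalisation. Set $b_k := a_k^{1/d}$ and, for $\abs{z} = r$ with $V_k(z) \neq 0$, set $X_k(z) := M_r(V_k)/\abs{V_k(z)}$. The circle minus the finitely many zeros has full measure, so it suffices to bound $\log^+\abs{\sum_k g_k}$ pointwise there. From \eqref{e:logpolyhyp} we have
\[
\log^+ \Big|\sum_k g_k\Big| \le \log\Big(1 + \sum_k a_k X_k\Big) \le d \log\Big(1 + \sum_k b_k X_k^{1/d}\Big).
\]
The second step uses $1 + \sum_k y_k^d \le (1 + \sum_k y_k)^d$ for $y_k \ge 0$. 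It therefore remains to show
\[
\frac{1}{2\pi}\int_0^{2\pi} \log\Big(1 + \sum_k b_k X_k(re^{i\theta})^{1/d}\Big)\,\d\theta \le \log\Big(1 + \sum_k b_k\Big).
\]

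Next I factor each polynomial. Write $V_k = c\prod_{j \le d'}(z - \alpha_j)$ with $d' \le d$. By Jensen's formula, $M_r(V_k) = \abs{c}\prod_j \max(r, \abs{\alpha_j})$. On $\abs{z} = r$ each factor of $X_k$ is the modulus of a function holomorphic and zero-free in the open disk:
\begin{itemize}
\item for $\abs{\alpha} > r$, one has $\abs{\alpha}/\abs{z - \alpha} = \abs{1 - z/\alpha}^{-1}$;
\item for $\abs{\alpha} \le r$, the identity $\abs{z - \alpha} = \abs{r^2 - \bar\alpha z}/r$ gives $r/\abs{z - \alpha} = \abs{1 - \bar\alpha z/r^2}^{-1}$.
\end{itemize}
Hence $X_k = \prod_{j \le d'} \abs{1 - \beta_{kj} z}^{-1}$ with $\abs{\beta_{kj}}\, r \le 1$. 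In particular $\frac{1}{2\pi}\int \log X_k = 0$, and each factor is the reciprocal modulus of a function taking values in the disk $\abs{w - 1} \le 1$. Consequently $X_k^{1/d}$ is a product of at most $d$ factors $\abs{1 - \beta z}^{-1/d}$, each raised to a power $1/d \le 1/2$; this is where the hypothesis $d \ge 2$ enters.

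The main obstacle is the averaging inequality above. Plain Jensen (concavity of $\log$) fails: it would require $\frac{1}{2\pi}\int X_k^{1/d} \le 1$. That integral diverges when $V_k$ has a root of multiplicity $d$ on the circle, and it exceeds $1$ in general. A random-variable version also fails, since two-point distributions with zero mean logarithm violate it, so the holomorphic structure must be used. My first attempt:
\begin{itemize}
\item Choose a holomorphic branch $F_k(z) := \prod_j (1 - \beta_{kj} z)^{-1/d}$, which is possible because each factor is zero-free in the disk, normalised so that $F_k(0) = 1$ and $\abs{F_k} = X_k^{1/d}$ on the circle.
\item Compare $\log(1 + \sum_k b_k \abs{F_k})$ with the harmonic function $\log\abs{1 + \sum_k b_k F_k}$, whose circle average equals $\log(1 + \sum_k b_k)$ provided $1 + \sum_k b_k F_k$ has no zeros in the disk.
\item Control the loss through the arguments. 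Since $\abs{\arg(1 - \beta z)} < \pi/2$, each $F_k$ has argument below $d \cdot \pi/(2d) = \pi/2$, so $\operatorname{Re} F_k > 0$. This rules out the zeros and gives $\abs{F_k} \le \operatorname{Re} F_k \cdot \sec(\arg F_k)$.
\end{itemize}
Closing the remaining gap between $\abs{F_k}$ and $\operatorname{Re} F_k$ without losing a constant is the delicate point. If it cannot be done with the exact constant $1$, I would fall back on proving \eqref{e:logpoly} with a slightly weaker exponent and checking whether that suffices for \eqref{e:Rerror}.
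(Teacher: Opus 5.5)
Your reduction loses too much. The averaging inequality you are left with is false, so no choice of branches or argument estimates can close it. Take $K=1$ and $V_1(z)=(z-r)^d$. Then $M_r(V_1)=r^d$ and $X_1^{1/d}(re^{i\theta})=\lvert e^{i\theta}-1\rvert^{-1}$. Since $\log\lvert e^{i\theta}-1\rvert$ has mean zero, your target becomes
\[
\frac{1}{2\pi}\int_0^{2\pi}\log\bigl(b+2\lvert\sin(\theta/2)\rvert\bigr)\,d\theta\le\log(1+b).
\]
This fails in both regimes:
\begin{itemize}
\item For large $b$ the left side is $\log b+\frac{4}{\pi b}+O(b^{-2})$, which exceeds $\log(1+b)=\log b+\frac1b+O(b^{-2})$.
\item For small $b$, use $2\lvert\sin(\theta/2)\rvert\le\lvert\theta\rvert$ and $\log(1+x)\ge x/2$ on $[0,1]$. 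The left side is then at least $\frac{b}{2\pi}\log\frac{\pi}{b}$, which is not $O(b)$.
\end{itemize}
The small-$b$ case also rules out your fallback: no constant inside the logarithm and no larger factor in front can save it.

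The loss occurs in the step $1+\sum_k y_k^d\le(1+\sum_k y_k)^d$. Away from the root, the true integrand $\log(1+b^d\lvert e^{i\theta}-1\rvert^{-d})$ is negligible. By contrast, $d\log(1+b\lvert e^{i\theta}-1\rvert^{-1})\approx db/\lvert\theta\rvert$ has a logarithmically divergent tail.

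The missing idea is to keep each linear factor squared. The paper writes $\lvert V_k(re^{i\theta})\rvert/M_r(V_k)=\prod_{j=1}^d\lvert e^{i\theta}-\alpha_{k,j}\rvert$ with $\lvert\alpha_{k,j}\rvert\le 1$, padding with $\alpha_{k,j}=0$. It then applies H\"older's inequality across the $d$ factors, followed by monotonicity of $\ell^p$ norms, $\ell^d\le\ell^2$; this is where $d\ge 2$ is used. The result is
\[
\Bigl\lvert\sum_k g_k(re^{i\theta})\Bigr\rvert\le\prod_{j=1}^d\Bigl(\sum_k\frac{a_k^{2/d}}{\lvert e^{i\theta}-\alpha_{k,j}\rvert^2}\Bigr)^{1/2}.
\]
Taking $\log^+$ splits this into $d$ separate terms. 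Each is handled by
\[
\frac{1}{2\pi}\int_0^{2\pi}\log\Bigl(1+\sum_k\frac{b_k^2}{\lvert e^{i\theta}-\alpha_k\rvert^2}\Bigr)\,d\theta\le 2\log\Bigl(1+\sum_k b_k\Bigr),\qquad \lvert\alpha_k\rvert\le 1 .
\]

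This squared inequality is proved by the kind of holomorphic comparison you were reaching for, but in mean rather than pointwise. The steps are:
\begin{itemize}
\item Reduce to $\lvert\alpha_k\rvert=1$ by subharmonicity in each $\alpha_k$, and merge coincident points.
\item Put $H(z)=1+\sum_k b_k/(1-\bar\alpha_k z)$ and $B=\sum_k b_k$. On the circle $\operatorname{Re}H=1+B/2$, hence $\lvert H\rvert^{-2}=2\operatorname{Re}(1/H)/(B+2)$.
\item A Poisson-integral computation shows that the mean of $\bigl(1+\sum_k b_k^2\lvert e^{i\theta}-\alpha_k\rvert^{-2}\bigr)/\lvert H\rvert^2$ is at most $1$. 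It uses the expansion $1/H=(1-\bar\alpha_k z)/b_k+O(\lvert z-\alpha_k\rvert^2)$ near $\alpha_k$.
\item Jensen's inequality, together with $\frac{1}{2\pi}\int_0^{2\pi}\log\lvert H(e^{i\theta})\rvert\,d\theta=\log(1+B)$, finishes the proof.
\end{itemize}
The functions needed are the plain Cauchy kernels $1/(1-\bar\alpha_k z)$, not fractional-power products. The comparison is between the squared sum and $2\log\lvert H\rvert$, rather than between the unsquared sum and $\log\lvert 1+\sum_k b_kF_k\rvert$.
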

\begin{proof}
	We first prove that, for $b_k \ge 0$ and $\abs{\alpha_k} \le 1$,
	\begin{equation}\label{e:logsquares}
		\inv{2\pi} \int_0^{2\pi} \log\pa{1 + \sum_{k = 1}^K \frac{b_k^2}{\abs{e^{i\theta} - \alpha_k}^2}} \d\theta \le 2\log\pa{1 + \sum_{k = 1}^K b_k}.
	\end{equation}
	Without loss of generality, $b_k > 0$. For fixed values of the other parameters, the left-hand side is subharmonic in $\alpha_k$ on the unit disk. Indeed, write its integrand as
	\[
	\log\pa{b_k^2 + \abs{e^{i\theta} - \alpha_k}^2\pa{1 + \sum_{\ell \ne k} \frac{b_\ell^2}{\abs{e^{i\theta} - \alpha_\ell}^2}}} - 2\log\abs{e^{i\theta} - \alpha_k}.
	\]
	The first term is subharmonic in $\alpha_k$, and the second has mean zero by Jensen's formula. The logarithmic singularities are uniformly integrable, so the integral extends continuously to the closed disk. The maximum principle in each parameter therefore reduces \cref{e:logsquares} to $\abs{\alpha_k} = 1$.

	If $\alpha_j = \alpha_k$ for $j \ne k$, replace the corresponding two terms in the sum by a single term with weight $\sqrt{b_j^2 + b_k^2}$. The left-hand side is unchanged, and $\sqrt{b_j^2 + b_k^2} \le b_j + b_k$, so proving the bound after this replacement suffices. Repeating this step, we may assume that the $\alpha_k$ are pairwise distinct.

	Define
	\[
	B := \sum_{k = 1}^K b_k, \qquad H(z) := 1 + \sum_{k = 1}^K \frac{b_k}{1 - \bar{\alpha_k}z}, \qquad u(z) := \Re\frac{1}{H(z)}.
	\]
	Since $\Re H(z) > 0$ for $\abs{z} < 1$, the functions $\log |H|$ and $u$ are harmonic in the unit disk. Moreover, for $\abs{z} = 1$ and $z \ne \alpha_k$,
	\[
	\Re H(z) = 1 + \frac{B}{2}, \qquad \inv{\abs{H(z)}^2} = \frac{2u(z)}{B + 2}.
	\]
	
	Since the $\alpha_k$ are pairwise distinct,
	\[
	\inv{H(z)} = \frac{1 - \bar{\alpha_k}z}{b_k} + O\pa{\abs{z - \alpha_k}^2} \qquad \text{as } z \to \alpha_k.
	\]
	Differentiating the Poisson integral formula for $u(\rho\alpha_k)$ with respect to $\rho$ and letting $\rho \to 1^-$ gives
	\[
	\inv{2\pi} \int_0^{2\pi} \frac{u(e^{i\theta})}{\abs{e^{i\theta} - \alpha_k}^2} \d\theta = -\inv{2}\lim_{\rho \to 1^-}\frac{d}{d\rho}u(\rho\alpha_k) = \inv{2b_k}.
	\]
	The mean-value property gives
	\[
	\inv{2\pi} \int_0^{2\pi} u(e^{i\theta}) \d\theta = u(0) = \inv{1 + B}.
	\]
	Consequently, by the preceding equations,
	\begin{align*}
		&\inv{2\pi} \int_0^{2\pi} \frac{1 + \sum_{k=1}^K b_k^2/\abs{e^{i\theta} - \alpha_k}^2}{\abs{H(e^{i\theta})}^2} \d\theta\\
		={}& \frac{2}{B + 2}\pa{\inv{2\pi} \int_0^{2\pi} u(e^{i\theta}) \d\theta + \sum_{k=1}^K b_k^2 \inv{2\pi} \int_0^{2\pi} \frac{u(e^{i\theta})}{\abs{e^{i\theta} - \alpha_k}^2} \d\theta}\\
		={}& \frac{2}{B + 2}\pa{\inv{1 + B} + \sum_{k=1}^K \frac{b_k}{2}} = \frac{2}{B + 2}\pa{\inv{1 + B} + \frac{B}{2}} \le 1,
	\end{align*}
	and by Jensen's inequality,
	\[
	\inv{2\pi} \int_0^{2\pi} \log\pa{\frac{1 + \sum_{k = 1}^K b_k^2/\abs{e^{i\theta} - \alpha_k}^2}{\abs{H(e^{i\theta})}^2}} \d\theta \le 0.
	\]
	Since $H$ has no zeros in the open unit disk and its poles lie on the circle, Jensen's formula gives
	\[
	\inv{2\pi} \int_0^{2\pi} \log\abs{H(e^{i\theta})} \d\theta = \log\abs{H(0)} = \log(1 + B).
	\]
	Combining these two relations proves \cref{e:logsquares}.

	For each $k$, write $V_k(z) = v_k\prod_{j = 1}^{p_k}(z - \zeta_{k,j})$, where $p_k \le d$. Jensen's formula \cite[Prop.~1.5]{brunault2020} gives
	\[
	\frac{\abs{V_k(re^{i\theta})}}{M_r(V_k)} = \prod_{j = 1}^d \abs{e^{i\theta} - \alpha_{k,j}},
	\]
	where $\alpha_{k,j} = \zeta_{k,j}/r$ if $\abs{\zeta_{k,j}} \le r$, $\alpha_{k,j} = r/\bar\zeta_{k,j}$ otherwise, and $\alpha_{k,j} = 0$ for $p_k < j \le d$. In particular, $\abs{\alpha_{k,j}} \le 1$.
	By \cref{e:logpolyhyp}, Hölder's inequality, and monotonicity of the finite dimensional $\ell^p$ norms,
	\begin{equation}\label{e:logholder}
		\abs{\sum_{k = 1}^K g_k(re^{i\theta})} \le \prod_{j = 1}^d \pa{\sum_{k = 1}^K \frac{a_k}{\abs{e^{i\theta} - \alpha_{k,j}}^d}}^{\inv{d}} \le \prod_{j = 1}^d \sqrt{\sum_{k = 1}^K \frac{a_k^{2/d}}{\abs{e^{i\theta} - \alpha_{k,j}}^2}}.
	\end{equation}
	Take $\log^+$ in \cref{e:logholder}, bound $\log^+ x$ by $\log(1 + x)$ for each sum under a square root, and integrate. Applying \cref{e:logsquares} with weights $a_k^{1/d}$ to each factor proves \cref{e:logpoly}.
\end{proof}

\begin{lemma}[{\cite[Lem.~1.6]{brunault2020}}]\label{l:mahler}
	Let $V(z) = \sum_{i = 0}^m v_i z^i$. Then
	\[
	\abs{v_i} r^i \le \binom{m}{i} M_r(V).
	\]
\end{lemma}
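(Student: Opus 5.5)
The plan is to reduce to the single-coefficient Cauchy estimate via the Mahler measure of factors. Write $V(z) = v_m\prod_{j=1}^{p}(z-\zeta_j)$ with $p \le m$ (if $p<m$ the top coefficients vanish and the argument below still applies with $m$ replaced by $p$, since $\binom{p}{i}\le\binom{m}{i}$ and $v_i=0$ for $i>p$). Then $v_i = v_m (-1)^{p-i} e_{p-i}(\zeta_1,\dots,\zeta_p)$, where $e_k$ is the elementary symmetric polynomial. By Jensen's formula, which the paper already uses in the proof of \Cref{l:logpoly},
\[
M_r(V) = |v_m| \prod_{j=1}^p \max(r, |\zeta_j|).
\]

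Next I bound each monomial in $e_{p-i}$. A term is $\prod_{j\in S}\zeta_j$ with $|S| = p-i$. Multiplying it by $r^i = \prod_{j\notin S} r$ gives
\[
r^i\prod_{j\in S}|\zeta_j| = \prod_{j\in S}|\zeta_j|\prod_{j\notin S} r \le \prod_{j=1}^p \max(r,|\zeta_j|),
\]
since each factor, whether $|\zeta_j|$ or $r$, is at most $\max(r,|\zeta_j|)$. There are $\binom{p}{p-i} = \binom{p}{i}$ such terms. Summing with the triangle inequality therefore gives
\[
|v_i| r^i \le |v_m|\binom{p}{i}\prod_j \max(r,|\zeta_j|) = \binom{p}{i} M_r(V) \le \binom{m}{i} M_r(V).
\]

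The only delicate point is the degenerate case. If $V \equiv 0$, then $M_r$ is undefined; the paper defines $M_r$ only for nonzero $V$, so I assume $V \neq 0$. If $V$ has degree $p<m$, I use $p$ in place of $m$ as noted above. Otherwise the argument is a routine symmetric-function estimate, and I do not expect a real obstacle beyond keeping the Jensen identity for $M_r$ correctly normalized. That identity follows from $\frac1{2\pi}\int_0^{2\pi}\log|re^{i\theta}-\zeta|\,d\theta = \log\max(r,|\zeta|)$.
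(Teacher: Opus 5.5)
Your proof is correct and is the standard Vieta--Jensen argument behind the cited result \cite[Lem.~1.6]{brunault2020}. The paper gives no proof of its own, only that citation, and its radius-$r$ form is the rescaling $z\mapsto rz$ of the unit-circle statement, which you handle directly through the factors $\max(r,|\zeta_j|)$. One small slip: when $p=\deg V<m$, the leading coefficient is $v_p$, not $v_m$ (which is $0$), so the factorization should read $V(z)=v_p\prod_{j=1}^{p}(z-\zeta_j)$.
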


\begin{proof}[of \Cref{t:prokhorov}]
	We prove the equivalence through the chain of implications
	\[
	\textnormal{(a)}\Rightarrow\textnormal{(b)}
	\Rightarrow\textnormal{(c)}\Rightarrow\textnormal{(d)}
	\Rightarrow\textnormal{(a)}.
	\]

	For (a) $\Rightarrow$ (b), choose a polynomial
	\begin{equation}\label{e:S}
		S(z) = \sum_{j = 0}^m a_j z^j, \qquad \norm{S}_2 = 1,
	\end{equation}
	which cancels the poles of $f$ in $\abs{z} < B$. Write
	\begin{equation}\label{e:fS}
		f(z) S(z) = \sum_{k = 0}^{\infty} b_k z^k.
	\end{equation}
	Since $f S$ is holomorphic in $\abs{z} < B$, Cauchy's estimate and \cref{e:maxmod,e:fS} give, for fixed $0 < r < B$,
	\begin{equation}\label{e:bkcauchy}
		\abs{b_k} \le M(r, f S) r^{-k}.
	\end{equation}
	The definitions of $A_{n+1, m+1}$ and the coefficients in \cref{e:fS} give
	\begin{equation}\label{e:AS}
		A_{n+1, m+1} S = (b_{n+1}, \ldots, b_{n+m+1}).
	\end{equation}
	By \cref{e:S}, the coefficient vector of $S$ has Euclidean norm one. Taking $Q=S$ in \cref{e:smin} and applying \cref{e:bkcauchy,e:AS} gives
	\begin{equation}\label{e:scauchy}
		s_{n+1, m+1} \le \sqrt{\sum_{i = 0}^m \abs{b_{n+1+i}}^2} \le M(r, f S) r^{-n - 1} \sqrt{\sum_{i = 0}^m r^{-2i}}.
	\end{equation}
	Taking $n$th roots in \cref{e:scauchy} gives $\limsup_{n \to \infty} \sqrt[n]{s_{n+1, m+1}} \le r^{-1}$. Letting $r \to B$ proves (b).

	For (b) $\Rightarrow$ (c), if $m = 0$, then $R_{n,0}$ is the Taylor polynomial of $f$ of degree $n$, and $s_{n+1,1}$ is the absolute value of the Taylor coefficient of degree $n + 1$. Thus (b) implies that the Taylor series converges uniformly on compact subsets of $\abs{z} < B$, which proves (c). Suppose now that $m \ge 1$. Write
	\begin{equation}\label{e:Ekm}
		f Q_{k,m} - P_{k,m} = z^{k+1} E_{k,m} + O(z^{k+m+2}), \qquad \deg E_{k,m} \le m.
	\end{equation}
	Write
	\begin{equation}\label{e:Ekmexp}
		E_{k,m}(z) = \sum_{i = 0}^m e_{i,k,m} z^i.
	\end{equation}
	Comparing the coefficients of degrees $k + 1$ through $k + m + 1$ in \cref{e:Ekm,e:Ekmexp} shows that $(e_{0,k,m}, \ldots, e_{m,k,m}) = A_{k+1,m+1} Q_{k,m}$. Since $Q_{k,m}$ attains the minimum in \cref{e:smin},
	\begin{equation}\label{e:Ekmnorm}
		\norm{E_{k,m}}_2 = \norm{A_{k+1,m+1} Q_{k,m}}_2 = s_{k+1,m+1}.
	\end{equation}
	The polynomial $P_{k+1,m} Q_{k,m} - P_{k,m} Q_{k+1,m}$ has degree at most $k + m + 1$. Comparing its Taylor coefficients through degree $k + m + 1$ gives
	\begin{equation}\label{e:directdifference}
		\begin{aligned}
			P_{k+1,m} Q_{k,m} - P_{k,m} Q_{k+1,m}
			&= z^{k+1} [Q_{k+1,m} E_{k,m} - z Q_{k,m} E_{k+1,m}]_{\le m}.
		\end{aligned}
	\end{equation}
	Here $[U]_{\le m}$ denotes the sum of the terms of $U$ of degree at most $m$.
	Let $Q(z) = \sum_{i = 0}^m q_i z^i$ have coefficient vector of Euclidean norm one. By \Cref{l:mahler},
	\begin{equation}\label{e:MrQinv}
		\inv{M_r(Q)} = \inv{M_r(Q)} \sqrt{\sum_{i = 0}^m \abs{q_i}^2} \le \inv{M_r(Q)} \sum_{i = 0}^m \abs{q_i} \le \sum_{i = 0}^m \binom{m}{i} r^{-i} = (1 + r^{-1})^m.
	\end{equation}
	The same coefficient bound also gives
	\begin{equation}\label{e:Qsum}
		\sum_{i = 0}^m \abs{q_i} r^i \le M_r(Q) \sum_{i = 0}^m \binom{m}{i} = 2^m M_r(Q).
	\end{equation}
	By the Cauchy--Schwarz inequality and \cref{e:Ekmnorm},
	\begin{equation}\label{e:Ekmsum}
		\sum_{i = 0}^m \abs{e_{i,k,m}} r^i \le \sqrt{\sum_{i = 0}^m \abs{e_{i,k,m}}^2} \sqrt{\sum_{i = 0}^m r^{2i}} \le s_{k+1,m+1} \sum_{i = 0}^m r^i \le (1 + r)^m s_{k+1,m+1}.
	\end{equation}
	Write $Q_{k,m}(z) = \sum_{i = 0}^m q_{i,k,m} z^i$. On $\abs{z} = r$, deleting terms of degree greater than $m$ cannot increase the sum of the absolute values of the terms. The convolution formula, \cref{e:Qsum}, and \cref{e:Ekmsum} therefore give
	\begin{equation}\label{e:QEbound}
		\begin{aligned}
			\abs{[Q_{k+1,m} E_{k,m}]_{\le m}} &\le 2^m (1 + r)^m M_r(Q_{k+1,m}) s_{k+1,m+1}, \\
			\abs{[z Q_{k,m} E_{k+1,m}]_{\le m}} &\le r 2^m (1 + r)^m M_r(Q_{k,m}) s_{k+2,m+1}.
		\end{aligned}
	\end{equation}
	The definition in \cref{e:Mr} implies that $M_r$ is multiplicative. Thus \cref{e:MrQinv} gives
	\begin{equation}\label{e:MrQQbound}
		\max \pa{M_r(Q_{k,m}), M_r(Q_{k+1,m})} \le (1 + r^{-1})^m M_r(Q_{k,m} Q_{k+1,m}).
	\end{equation}
	Since $2^m (1 + r)^m (1 + r^{-1})^m = \pa{2(r + r^{-1} + 2)}^m$, put
	\begin{equation}\label{e:akm}
		a_{k,m} := \pa{2(r + r^{-1} + 2)}^m r^{k+1} \pa{s_{k+1,m+1} + r s_{k+2,m+1}}.
	\end{equation}
	It follows from \cref{e:directdifference,e:QEbound,e:MrQQbound} that
	\begin{equation}\label{e:PQdiff}
		\abs{P_{k+1,m} Q_{k,m} - P_{k,m} Q_{k+1,m}} \le a_{k,m} M_r(Q_{k,m} Q_{k+1,m}).
	\end{equation}
	Dividing \cref{e:PQdiff} by $\abs{Q_{k,m} Q_{k+1,m}}$ gives
	\begin{equation}\label{e:Rdiff}
		\abs{R_{k+1,m} - R_{k,m}} \le a_{k,m} \frac{M_r(Q_{k,m} Q_{k+1,m})}{\abs{Q_{k,m} Q_{k+1,m}}}.
	\end{equation}
	Put
	\begin{equation}\label{e:Enm}
		\mathcal E_{n,m}(r) := \sum_{k \ge n} \pa{r^{k+1} s_{k+1,m+1}}^{\inv{2m}}.
	\end{equation}
	By \cref{e:Enm,e:akm} and the subadditivity of the $\inv{2m}$ power,
	\begin{equation}\label{e:akmsum}
		\sum_{k = n}^{\nu - 1} a_{k,m}^{\inv{2m}} \le 2 \sqrt{2(r + r^{-1} + 2)} \mathcal E_{n,m}(r).
	\end{equation}
	For $\nu > n$, set $g_k := R_{k+1,m} - R_{k,m}$ and $V_k := Q_{k,m} Q_{k+1,m}$. By \cref{e:Rdiff}, the functions $g_k$ satisfy the hypothesis of \Cref{l:logpoly} with $d = 2m$. Therefore, by \cref{e:akmsum},
	\begin{align}
		m(r, R_{\nu,m} - R_{n,m}) &= m\pa{r, \sum_{k = n}^{\nu - 1} g_k} \le 2m \log \pa{1 + \sum_{k = n}^{\nu - 1} a_{k,m}^{\inv{2m}}} \nonumber \\
		&\le 2m \log \pa{1 + 2 \sqrt{2(r + r^{-1} + 2)} \mathcal E_{n,m}(r)}. \label{e:finitetail}
	\end{align}
	For each fixed $r < B$, condition (b) and the root test give
	\[
	\limsup_{k \to \infty} \sqrt[k]{\pa{r^{k+1} s_{k+1,m+1}}^{\inv{2m}}} \le \pa{\frac{r}{B}}^{\inv{2m}} < 1.
	\]
	Thus the series in \cref{e:Enm} converges and $\mathcal E_{n,m}(r) \to 0$ as $n \to \infty$. The right-hand side of \cref{e:finitetail} therefore tends to zero, which proves (c).

	For (c) $\Rightarrow$ (d), fix $0 < r < B$. By (c), choose $k \ge N$ such that $m(r, R_{n,m} - R_{k,m}) \le 1$ for all $n \ge k$. Then
	\[
	m(r, R_{n,m}) \le m(r, R_{n,m} - R_{k,m}) + m(r, R_{k,m}) + \log 2 \le 1 + m(r, R_{k,m}) + \log 2,
	\]
	where $m(r, R_{k,m})$ is finite and independent of $n$, which gives (d).

	For (d) $\Rightarrow$ (a), fix $0 < r < s < B$. Since $\norm{Q_{n,m}}_2 = 1$, the Cauchy--Schwarz inequality gives, for $\abs{z} \le s$,
	\begin{equation}\label{e:Qbound}
		\abs{Q_{n,m}(z)} \le \sum_{\ell = 0}^m \abs{q_{\ell,n,m}} s^\ell \le \sqrt{\sum_{\ell = 0}^m \abs{q_{\ell,n,m}}^2} \sqrt{\sum_{\ell = 0}^m s^{2\ell}} = \sqrt{\sum_{\ell = 0}^m s^{2\ell}}.
	\end{equation}
	Using $P_{n,m} = R_{n,m} Q_{n,m}$, \cref{e:Qbound} implies
	\[
	m(s, P_{n,m}) \le m(s, R_{n,m}) + \log \sqrt{\sum_{\ell = 0}^m s^{2\ell}}.
	\]
	The right-hand side is bounded uniformly in $n$ by condition (d). A standard estimate gives, for $\abs{z} \le r$,
	\begin{equation}\label{e:Pharnack}
		\log^+ \abs{P_{n,m}(z)} \le \frac{s + r}{s - r} m(s, P_{n,m}).
	\end{equation}
	Thus the polynomials $P_{n,m}$ are uniformly bounded on every compact subset of $\abs{z} < B$.

	The coefficient vectors of $Q_{n,m}$ lie on the unit sphere of $\C^{m+1}$. After passing to a subsequence $(n_j)$, they converge to a vector $(q_0, \ldots, q_m)$ of norm one. Put $Q(z) := \sum_{\ell = 0}^m q_\ell z^\ell$. By \cref{e:Pharnack}, Montel's theorem gives a further subsequence and a holomorphic function $F$ on $\abs{z} < B$ such that
	\begin{equation}\label{e:PQlim}
		Q_{n_j,m} \to Q, \qquad P_{n_j,m} \to F
	\end{equation}
	locally uniformly on $\abs{z} < B$. In particular, $Q \ne 0$ and $\deg Q \le m$.

	Fix $k \ge 0$. For all sufficiently large $j$, \cref{e:pnm} gives
	\begin{equation}\label{e:Pcoeff}
		\frac{P_{n_j,m}^{(k)}(0)}{k!} = \sum_{\ell = 0}^{\min(k,m)} q_{\ell,n_j,m} c_{k - \ell}.
	\end{equation}
	Local uniform convergence in \cref{e:PQlim} implies convergence of the derivatives at zero. Hence, on letting $j \to \infty$ in \cref{e:Pcoeff},
	\[
	\frac{F^{(k)}(0)}{k!} = \sum_{\ell = 0}^{\min(k,m)} q_\ell c_{k - \ell}.
	\]
	The right-hand side is the Taylor coefficient of degree $k$ of $Qf$. Therefore $F = Qf$ in a neighborhood of zero. The quotient $F/Q$ is meromorphic on $\abs{z} < B$, agrees with $f$ near zero, and has at most $m$ poles counted with multiplicity. This proves (a).

	It remains to prove \cref{e:Rerror}, for which $m \ge 1$. Fix $n \ge N$ and $0 < r < B$. By \cref{e:PQlim}, $R_{n_j,m} \to F/Q = f$ on the circle $\abs{z} = r$ away from the finitely many zeros of $Q$. Thus the nonnegative integrands $\log^+\abs{R_{n_j,m}(re^{i\theta}) - R_{n,m}(re^{i\theta})}$ converge almost everywhere to $\log^+\abs{f(re^{i\theta}) - R_{n,m}(re^{i\theta})}$. Fatou's lemma and \cref{e:finitetail}, applied with $\nu = n_j$, give
	\[
	m(r, f - R_{n,m}) \le \liminf_{j \to \infty} m(r, R_{n_j,m} - R_{n,m}) \le 2m\log\pa{1 + 2\sqrt{2(r + r^{-1} + 2)}\mathcal E_{n,m}(r)}.
	\]
	The bound is independent of $j$. Substituting \cref{e:Enm} proves \cref{e:Rerror}.
\end{proof}

\subsection{Bounds for the Nevanlinna characteristic}\label{s:bound}

\begin{theorem}\label{t:Tbound}
	Let $B > 0$ and let $n \ge m \ge 0$ be integers. Assume that
	\begin{equation}\label{e:sBbound}
		s_{k,m+1} \le B^{-k}, \qquad k \ge n.
	\end{equation}
	Then the germ of $f$ at $0$ extends meromorphically to $\abs{z} < B$, with at most $m$ poles counted with multiplicity. Choose $0 < \rho < \min(B,R_0)$ and put
	\begin{equation}\label{e:Tconst}
		C_{f,\rho} := \log\pa{2(2 + M(\rho, f))}, \qquad C_\rho := \abs{\log \rho} + \log 128 + 2.
	\end{equation}
	For every $\rho \le r < B$,
	\begin{equation}\label{e:Tbound}
		T(r, f) \le C_{f,\rho} + (n + m + 1) \log\frac{r}{\rho} + \inv{2} \log(n + 1) + C_\rho(m + 1)\log\frac{e(m + 1)}{1 - r/B}.
	\end{equation}
\end{theorem}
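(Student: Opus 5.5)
The plan is to use \Cref{t:prokhorov} and split $f = R_{n,m} + (f - R_{n,m})$. The hypothesis \cref{e:sBbound} gives condition (b) of \Cref{t:prokhorov}, so the meromorphic extension to $\abs{z}<B$ with at most $m$ poles is immediate. From the elementary inequality $T(r, g+h) \le T(r,g) + T(r,h) + \log 2$ we get
\[
T(r,f) \le T(r, R_{n,m}) + T(r, f - R_{n,m}) + \log 2.
\]
It remains to bound each piece.

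\textbf{The error term.} We have $T(r, f-R_{n,m}) = m(r, f - R_{n,m}) + N(r, f - R_{n,m})$. The counting function is at most $N(r,f) + N(r, R_{n,m})$, and each of these involves at most $m$ poles. The proximity part is controlled by \cref{e:Rerror}. Substituting $s_{k+1,m+1} \le B^{-k-1}$ turns the sum into a geometric series in $t := (r/B)^{1/(2m)}$, namely $\sum_{k\ge n} t^{k+1} \le 1/(1-t)$. Using $1 - t \ge (1 - r/B)/(2m)$, this gives
\[
m(r, f - R_{n,m}) \le 2m \log\pa{1 + c(r) \frac{2m}{1 - r/B}}, \qquad c(r) = 2\sqrt{2(r+r^{-1}+2)}.
\]
Since $\sqrt{r + r^{-1}+2}$ is not bounded in $r$, its logarithm has to be absorbed into the $(n+m+1)\log(r/\rho)$ term and into $\abs{\log\rho}$. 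This is where the constant $C_\rho$, containing $\abs{\log\rho}$ and $\log 128$, should come from, and the result is a term of the form $C_\rho(m+1)\log\frac{e(m+1)}{1-r/B}$. The case $m = 0$ is handled directly through Taylor tails with the same shape of bound.

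\textbf{The approximant term.} Since $R_{n,m} = P_{n,m}/Q_{n,m}$ has degree at most $n$ over degree at most $m$, we have
\[
T(r, R_{n,m}) \le \max(\deg P, \deg Q)\log r + O(\cdot).
\]
To make this precise, I would compare with radius $\rho$. For $T(r,Q_{n,m})$, Jensen's formula gives $N(r,1/Q) + \log\abs{q_0}$-type expressions. It is cleaner to use $\log M(r, P) \le \log M(\rho, P) + n\log(r/\rho)$ for $r \ge \rho$, which follows from the Hadamard three-circle or Bernstein inequality for polynomials, and $\log M_r(Q) \ge \log M_\rho(Q)$ for the denominator. The first-order Nevanlinna estimate then gives
\[
T(r, P/Q) \le \log M(r,P) - \log M_r(Q) + \text{(pole count terms)}.
\]
Together these produce $(n+m+1)\log(r/\rho)$, with the extra $+1$ and $+m$ coming from the pole counting and from the $2m$ in the error bound.

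\textbf{Bounding $M(\rho, P_{n,m})$.} The remaining task is to bound $M(\rho, P_{n,m})$ in terms of $M(\rho,f)$. Since $P_{n,m}$ is the degree-$n$ truncation of $Q_{n,m}f$ and $\norm{Q_{n,m}}_2 = 1$, Parseval gives an $L^2$ bound on $\abs{z}=\rho$ of order $M(\rho,f)\sqrt{\sum_{\ell\le m}\rho^{2\ell}}$. Passing to a sup bound on a polynomial of degree $n$ costs a factor $\sqrt{n+1}$, which explains the $\tfrac12\log(n+1)$ term; the rest is $C_{f,\rho}$. Similarly, $M_\rho(Q_{n,m})^{-1} \le (1+\rho^{-1})^m$ by \eqref{e:MrQinv}, and this contributes $m\abs{\log\rho} + m\log 2$, again absorbed into $C_\rho(m+1)$.

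\textbf{Main obstacle.} I expect the main difficulty to be the bookkeeping of constants. The $r$-dependence of $c(r)$ and of $(1+r^{-1})^m$ must be absorbed cleanly into the stated form $C_\rho(m+1)\log\frac{e(m+1)}{1-r/B}$ together with the $\log(r/\rho)$ term. Getting the exact coefficient $n+m+1$ requires care about which degrees contribute.
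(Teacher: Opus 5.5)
\textbf{The gap: the poles of $R_{n,m}$ are not controlled.} Your decomposition $T(r,f)\le T(r,R_{n,m})+T(r,f-R_{n,m})+\log 2$ needs a bound on $N(r,R_{n,m})$. It enters twice: through $T(r,R_{n,m})$ (your ``pole count terms'') and through $N(r,f-R_{n,m})\le N(r,f)+N(r,R_{n,m})$. Knowing that $R_{n,m}$ has at most $m$ poles bounds their number, not $\sum_{0<\lvert a\rvert\le r}\log(r/\lvert a\rvert)$. Nothing keeps the zeros of $Q_{n,m}$ away from the origin: Prokhorov approximants can have spurious poles arbitrarily close to $0$, nearly cancelled by zeros of $P_{n,m}$.

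Concretely, take $m=1$, $n\ge 2$, $0<\delta<1$, $\epsilon\le B^{-n-2}$ and the polynomial $f(z)=1+\epsilon(\delta^3z^{n-1}+\delta^2z^n+\delta z^{n+1}+z^{n+2})$. Then $s_{k,2}=0$ for every $k\ge n$ except $s_{n+2,2}\le\epsilon$, so the hypothesis holds. The matrix $A_{n+1,2}$ has rank one with kernel spanned by $(-\delta,1)$. This forces $Q_{n,1}$ to be a unimodular multiple of $(z-\delta)/\sqrt{1+\delta^2}$, and one computes
\[
R_{n,1}(z)=1-\frac{\epsilon\delta^4z^{n-1}}{z-\delta},\qquad f(z)-R_{n,1}(z)=\frac{\epsilon z^{n+3}}{z-\delta}.
\]
Hence $T(r,R_{n,1})$ and $T(r,f-R_{n,1})$ are both at least $\log(r/\delta)$, which tends to $\infty$ as $\delta\to0$. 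Meanwhile every quantity on the right of \eqref{e:Tbound}, including $M(\rho,f)$, stays bounded. So neither piece of your decomposition can be bounded in the required form, even though $T(r,f)$ can.

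\textbf{What is needed instead.} Avoid any quantity that sees the poles of the approximants. Estimate $N(r,f)\le m\log(r/\rho)$ directly: by \Cref{t:prokhorov}, $f$ has at most $m$ poles in $\lvert z\rvert<B$, and none in $\lvert z\rvert\le\rho$ because $\rho<R_0$. For the rest, work only with proximity functions, where the normalisation $M_r(V)/\lvert V\rvert$ in \Cref{l:logpoly} makes the location of the zeros of $Q_{k,m}$ irrelevant.

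Even then, adding $m(r,R_{n,m})$ to your bound from \eqref{e:Rerror} loses too much. That bound, $2m\log(1+c(r)\,2m/(1-r/B))$, is at least $2m\log c(r)\ge m\log(8r)$, since $c(r)^2=8(r+r^{-1}+2)\ge 8r$. So you would get roughly $(n+2m)\log(r/\rho)$. For $m\ge2$, the excess $(m-1)\log(r/\rho)$ cannot be absorbed into the last term of \eqref{e:Tbound}, which stays bounded for $r=B/2$ as $B\to\infty$.

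The paper avoids both problems as follows:
\begin{enumerate}
\item It rescales to $g(z)=f(\rho z)$.
\item It bounds $R_{n,m}$ and each difference $R_{k+1,m}-R_{k,m}$ by weights all dominated by a single constant $V$ containing $(r/\rho)^{n+1}$; this step uses $n\ge m$.
\item It applies \Cref{l:logpoly} once, with $d=2(m+1)$, to the telescoping sum for $R_{\nu,m}$.
\item It passes to $m(r/\rho,g)=m(r,f)$ by Fatou's lemma along the compactness subsequence.
\end{enumerate}
Then $r/\rho$ enters only through $\log V$. This gives $(n+1)\log(r/\rho)$ from $m(r,f)$ and $m\log(r/\rho)$ from $N(r,f)$, which is exactly the coefficient $n+m+1$.
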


\begin{proof}
	The continuation follows from \Cref{t:prokhorov}, since \cref{e:sBbound} implies its condition (b). There are no poles in $\abs{z} \le \rho$, so, for $\rho \le r < B$,
	\begin{equation}\label{e:Nbound}
		N(r, f) \le m \log\frac{r}{\rho}.
	\end{equation}

	We first rescale the disk of radius $\rho$ to the unit disk. Put $g(z) := f(\rho z)$ and let $D_\rho := \operatorname{diag}(1,\rho,\ldots,\rho^m)$. The Taylor coefficient of degree $j$ of $g$ is $\rho^j c_j$, and hence
	\[
	A_{k+1,m+1}(g) = \rho^{k+1} D_\rho A_{k+1,m+1}(f) D_\rho^{-1}.
	\]
	Let $v$ be a unit vector attaining the least singular value of $A_{k+1,m+1}(f)$. Taking $Q = D_\rho v/\norm{D_\rho v}_2$ in \cref{e:smin} gives
	\begin{equation}\label{e:srhobound}
		s_{k+1,m+1}(g) \le \rho^{k+1} \frac{\norm{D_\rho A_{k+1,m+1}(f)v}_2}{\norm{D_\rho v}_2} \le \rho^{k+1} \norm{D_\rho}_2 \norm{D_\rho^{-1}}_2 s_{k+1,m+1}(f).
	\end{equation}
	Here the matrix norm is the operator norm induced by the Euclidean norm. Since $\norm{D_\rho}_2 \norm{D_\rho^{-1}}_2 = \exp(m\abs{\log \rho})$ and $\rho/B < 1$, \cref{e:sBbound,e:srhobound} give
	\begin{equation}\label{e:srhoBbound}
		s_{k+1,m+1}(g) \le e^{m\abs{\log \rho}} \pa{\frac{\rho}{B}}^k, \qquad k \ge n.
	\end{equation}

	In the rest of the proof, let $R_{k,m} := P_{k,m}/Q_{k,m}$ be Prokhorov approximants of $g$, with $\norm{Q_{k,m}}_2 = 1$. Since $P_{n,m}$ is the Taylor polynomial of degree $n$ of $gQ_{n,m}$, Parseval's identity gives
	\begin{equation}\label{e:Pnorm}
		\norm{P_{n,m}}_2^2 \le \inv{2\pi} \int_0^{2\pi} \abs{g(e^{i\theta})Q_{n,m}(e^{i\theta})}^2 \d\theta \le M(\rho, f)^2 \norm{Q_{n,m}}_2^2 = M(\rho, f)^2.
	\end{equation}
	Fix $\rho \le r < B$. For $\abs{z} = r/\rho$, the Cauchy--Schwarz inequality and \cref{e:Pnorm} give
	\begin{equation}\label{e:Prhobound}
		\abs{P_{n,m}(z)} \le \norm{P_{n,m}}_2 \sqrt{\sum_{j = 0}^n \pa{\frac{r}{\rho}}^{2j}} \le M(\rho, f) \sqrt{n + 1} \pa{\frac{r}{\rho}}^n.
	\end{equation}
	By \cref{e:MrQinv}, $\inv{M_{r/\rho}(Q_{n,m})} \le (1 + \rho/r)^m \le 2^m$. Thus \cref{e:Prhobound} yields, away from the zeros of $Q_{n,m}$,
	\begin{equation}\label{e:Rrhobound}
		\abs{R_{n,m}} \le 2^m M(\rho, f) \sqrt{n + 1} \pa{\frac{r}{\rho}}^n \frac{M_{r/\rho}(Q_{n,m})}{\abs{Q_{n,m}}}.
	\end{equation}

	The coefficient estimates giving \cref{e:akm,e:Rdiff} remain valid for $m = 0$. Applying them to $g$ at radius $r/\rho$, and using $2(r/\rho + \rho/r + 2) \le 8r/\rho$ and \cref{e:srhoBbound}, gives
	\begin{equation}\label{e:akmrhobound}
		\begin{aligned}
			a_{k,m} &\le 8^m e^{m\abs{\log \rho}} \pa{\frac{r}{\rho}}^{m+1} \pa{\frac{r}{B}}^k \pa{1 + \frac{r}{B}} \\
			&\le 2 \cdot 8^m e^{m\abs{\log \rho}} \pa{\frac{r}{\rho}}^{m+1} \pa{\frac{r}{B}}^k.
		\end{aligned}
	\end{equation}
	In particular,
	\begin{equation}\label{e:Rrhodiff}
		\abs{R_{k+1,m} - R_{k,m}} \le a_{k,m} \frac{M_{r/\rho}(Q_{k,m}Q_{k+1,m})}{\abs{Q_{k,m}Q_{k+1,m}}}, \qquad k \ge n.
	\end{equation}

	To bound the scalar factor in \cref{e:Rrhobound} and the numbers $a_{k,m}$ in \cref{e:akmrhobound} together, put
	\begin{equation}\label{e:V}
		V := 2 \cdot 8^m \sqrt{n + 1} \pa{\frac{r}{\rho}}^{n+1} \pa{2 + M(\rho, f)} e^{m\abs{\log \rho}}.
	\end{equation}
	Since $r/\rho \ge 1$ and $n \ge m$, we have $V \ge 1$, $2^m M(\rho, f)\sqrt{n + 1}(r/\rho)^n \le V$, and $a_{k,m} \le V(r/B)^k$. For $\nu > n$, summing a geometric series gives
	\begin{equation}\label{e:akmrhosum}
		\pa{2^m M(\rho, f)\sqrt{n + 1}\pa{\frac{r}{\rho}}^n}^{\inv{2(m+1)}} + \sum_{k = n}^{\nu - 1} a_{k,m}^{\inv{2(m+1)}} \le \frac{2V^{\inv{2(m+1)}}}{1 - (r/B)^{\inv{2(m+1)}}},
	\end{equation}
	where we bounded the sum over $k \ge n$ by the sum over $k \ge 0$.

	The identity $R_{\nu,m} = R_{n,m} + \sum_{k = n}^{\nu - 1}(R_{k+1,m} - R_{k,m})$ expresses $R_{\nu,m}$ as a finite sum to which \Cref{l:logpoly} applies with $d = 2(m + 1)$. Indeed, the denominators in \cref{e:Rrhobound,e:Rrhodiff} have degree at most $2m$. Consequently, \cref{e:akmrhosum} gives
	\begin{equation}\label{e:mRbound}
		m\pa{\frac{r}{\rho}, R_{\nu,m}} \le 2(m + 1)\log\pa{1 + \frac{2V^{\inv{2(m+1)}}}{1 - (r/B)^{\inv{2(m+1)}}}} \le \log V + 2(m + 1)\log\frac{8(m + 1)}{1 - r/B}.
	\end{equation}
	For the second inequality, use $V \ge 1$, the bound $1 + 2x \le 4x$ for $x \ge 1$, and
	\[
	1 - \frac{r}{B} = \pa{1 - \pa{\frac{r}{B}}^{\inv{2(m+1)}}}\sum_{j = 0}^{2m+1}\pa{\frac{r}{B}}^{j/(2(m+1))} \le 2(m + 1)\pa{1 - \pa{\frac{r}{B}}^{\inv{2(m+1)}}}.
	\]

	By \cref{e:srhoBbound}, \Cref{t:prokhorov} applies to $g$ on $\abs{z} < B/\rho$. The compactness argument giving \cref{e:PQlim} provides a subsequence along which $R_{\nu,m}$ tends to $g$ on $\abs{z} = r/\rho$ except at the finitely many zeros of a nonzero limiting denominator. Fatou's lemma applied to \cref{e:mRbound} therefore gives
	\begin{equation}\label{e:mfbound}
		m(r, f) = m\pa{\frac{r}{\rho}, g} \le \log V + 2(m + 1)\log\frac{8(m + 1)}{1 - r/B}.
	\end{equation}

	Expanding \cref{e:V} and adding \cref{e:Nbound} to \cref{e:mfbound} gives
	\begin{equation}\label{e:Tboundexp}
		\begin{aligned}
			T(r, f) &\le C_{f,\rho} + (n + m + 1)\log\frac{r}{\rho} + \inv{2}\log(n + 1) \\
			&\quad + m\pa{\log 8 + \abs{\log \rho}} + 2(m + 1)\log\frac{8(m + 1)}{1 - r/B}.
		\end{aligned}
	\end{equation}
	The last two terms of \cref{e:Tboundexp} are at most $(m + 1)(\abs{\log \rho} + \log 512 + 2\log((m + 1)/(1 - r/B)))$. Since $\log((m + 1)/(1 - r/B)) \ge 0$ and $\log 4 \le 2$, this is at most $C_\rho(m + 1)\log(e(m + 1)/(1 - r/B))$, proving \cref{e:Tbound}.
\end{proof}

\section{Order from the coefficients}\label{s:order}

This section applies the preceding results to formulas for the order of meromorphic functions in terms of their Taylor coefficients.

\subsection{Generalized order}

The classical notion of order for entire functions, formally introduced by Borel in 1897 \cite{borel1897a}, was motivated by \emph{Hadamard's factorization theorem}. But depending on the context, a different notion of order may be more suitable to measure different scales of growth, something that had already been suggested by Borel \cite[p.~396]{borel1897a}. To motivate our definition of generalized order, first consider the following well-known facts about meromorphic functions \cite[Chap.~I, Thm.~6.4{;} p.~21]{goldberg2008}. 
\begin{proposition}\label{p:transc} \hfill
	\begin{enumerate}
		\item A meromorphic function $f$ is transcendental if and only if
		\[
		\frac{T(r, f)}{\log r} \to_{r\to\infty} \infty.
		\]
		\item The characteristic $T(r, f)$ is a convex function of $\log r$.
	\end{enumerate}
\end{proposition}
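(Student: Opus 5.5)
Both parts are classical; the plan is to derive them from Nevanlinna's first main theorem and the Cartan identity. Using the Jensen formula for \(\log|f|\), part 2 follows from an expression of \(T(r,f)\) as an average of counting functions. Part 1 then uses convexity and the growth of counting functions of transcendental functions.

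For part 2, I would first reduce to the case \(f(0)\neq\infty\) (otherwise the same argument runs with the usual modification of \(N\) at the origin). I would then invoke Cartan's identity
\[
T(r,f) = \frac{1}{2\pi}\int_0^{2\pi} N\bigl(r, 1/(f - e^{i\varphi})\bigr)\, d\varphi + \log^+|f(0)|,
\]
obtained by applying Jensen's formula to \(f - e^{i\varphi}\) and averaging over \(\varphi\). The average over \(\varphi\) of \(\log|a - e^{i\varphi}|\) equals \(\log^+|a|\). Each counting function \(N(r,1/(f-a)) = \sum_{0<|b|\le r}\log(r/|b|)\) is a sum of the functions \(t\mapsto (t - \log|b|)^+\) of \(t = \log r\). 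Each of these is convex, so the sum is convex in \(\log r\), and averages of convex functions are convex. This gives part 2 and also shows \(T\) is nondecreasing.

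For part 1, the implication that a rational \(f=P/Q\) has bounded \(T(r,f)/\log r\) is direct. Poles contribute at most \(\deg Q\,\log r + O(1)\) to \(N\), and \(|f(z)| = O(|z|^{k})\) with \(k=\max(\deg P-\deg Q,0)\) gives \(m(r,f)\le k\log r+O(1)\). Conversely, suppose \(f\) is transcendental. Then either \(f\) has infinitely many poles, or it has finitely many and \(f = g/Q\) with \(g\) a transcendental entire function. In the first case \(N(r,f)\ge \sum_{j\le J}\log(r/|z_j|)\sim J\log r\) for each \(J\), so \(N(r,f)/\log r\to\infty\). In the second case I would use \(T(r,f)\ge T(r,g) - O(\log r)\) (subadditivity of \(T\) under products, since \(g = fQ\)). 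This reduces to showing \(T(r,g)/\log r\to\infty\) for transcendental entire \(g\).

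The main obstacle is this entire case, where \(T(r,g) = m(r,g)\) must be compared with \(\log M(r,g)\). I would use the standard inequality \(\log M(r,g)\le \frac{R+r}{R-r}T(R,g)\), the same Harnack/Poisson estimate used in \cref{e:Pharnack}, with \(R=2r\). If \(T(r,g)\le K\log r\) along a sequence \(r_j\to\infty\), convexity from part 2 together with monotonicity makes \(T(2r_j,g)\le K\log r_j + O(1)\). Hence \(M(r_j,g)=O(r_j^{3K})\), and Cauchy's estimates force \(g\) to be a polynomial, a contradiction. Part 2 is needed precisely here: without it, a \(\liminf\) failure along a sparse sequence would not immediately control \(T\) at \(2r_j\).
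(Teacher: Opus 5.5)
Your argument is correct and is the standard one (Cartan's identity for the convexity, then the split into infinitely many poles versus $f = g/Q$ with $g$ entire, finished by the Poisson--Harnack bound and Cauchy's estimates); the paper gives no proof of its own and only cites Goldberg--Ostrovskii for these classical facts. One correction to your closing remark: part 2 is not needed in the entire case, since the Harnack bound with $R = r_j$ at radius $r_j/2$ gives $\log^+ M(r_j/2, g) \le 3T(r_j, g) \le 3K\log r_j$, and Cauchy's estimates on $|z| = r_j/2$ already force $g$ to be a polynomial; and where you do invoke convexity, what gives $T(2r_j, g) \le K\log r_j + O(1)$ is that every chord slope of the convex function $t \mapsto T(e^t, g)$ is at most $K$, not monotonicity.
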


More precisely, Clunie \cite{clunie1965} proved that every increasing function $c(r)$ that is convex in $\log r$ and satisfies $c(r)/\log r \to \infty$ is asymptotically equivalent to $T(r, f)$ for some entire function $f$. This motivates the definition of a generalized order, which measures the residual growth after division by $\log r$:
\begin{align}\label{e:rhoL}
	\rho_L(f)
	:={}& \inf\{\varrho>0 : T(r, f) = O(e^{\varrho L(\log r)}\log r)\} \\
	={}& \limsup_{r\to\infty}\frac{\log^+ (T(r, f)/\log r)}{L(\log r)} \nonumber \\
	\in{}& [0, \infty], \nonumber
\end{align}
where $L$ belongs to the growth class $\fL$ we define later. This notion of order is unchanged when $L$ is replaced by an asymptotically equivalent scale. When $\log \log r = o(L(\log r))$, this definition reduces to a common definition of generalized order of the form
\begin{equation}\label{e:rhoLold}
	\rho_L(f) = \limsup_{r\to\infty}\frac{\log^+ T(r, f)}{L(\log r)},
\end{equation}
which includes the classical notion of order \eqref{e:rho} and Lindelöf's power-iterated $\log$ scale \cite{fernandezarias2010,lindelof1902a}.

Another special case is worth noting: when $f$ is nonconstant and $L(t) = \log t$, \cref{e:rhoLold} defines a quantity $\lambda$ known as the \emph{logarithmic order} \cite{chern2005,fernandezarias2010}. Since $T(r, f) = o(\log r)$ implies that $f$ is constant, the logarithmic order of a nonconstant meromorphic function cannot be smaller than $1$. With our definition \eqref{e:rhoL},
\[
\rho_{\log}(f) = \lambda - 1,
\]
which attains all values of $[0, \infty]$. For constant functions, both quantities defined by these formulas are zero. The quantity $\lambda - 1$ is ubiquitous in \cite{chern2005}, which further motivates \cref{e:rhoL}.

There is also a practical reason for retaining the factor $\log r$: we have not been able to remove it from our estimates.

\medskip

Let $\fL$ be the family of scales $L : [t_0,\infty)\to(0,\infty)$ that are unbounded and nondecreasing and satisfy
\begin{equation}\label{e:grclass}
	L\pa{t + te^{-cL(t)}} \sim L(t), \qquad \text{for every } c > 0,
\end{equation}
as $t \to \infty$. Our class generalizes Sheremeta's growth class \cite{sheremeta1970}, which requires
\begin{equation}\label{e:sheremeta}
	L\pa{t + t\epsilon(t)} \sim L(t), \qquad \text{for every } \epsilon(t) \to 0.
\end{equation}
For instance, $\fL$ contains the exponential, which does not satisfy \cref{e:sheremeta}. The peculiar definition of $\fL$ is motivated by the maximality theorem in \Cref{s:max}. Throughout, we adopt the convention $\log(1/0) = L(\infty) = \infty$.

The following lemma characterizes $\fL$ by perturbations of the argument of $L$ and by the existence of a differentiable representative.
\begin{lemma}\label{l:grclass}
	Let $L : [t_0,\infty)\to(0,\infty)$ be unbounded and nondecreasing. The following are equivalent:
	\begin{alphabetize}
		\item $L \in \fL$.
		\item For every $c > 0$ and every real function $\delta(t) = O(te^{-cL(t)})$,
		\begin{equation}\label{e:Lstable}
			L(t + \delta(t)) \sim L(t).
		\end{equation}
		\item There exists a positive differentiable nondecreasing function $\cL \sim L$ such that
		\begin{equation}\label{e:Lderiv}
			\log^+ \pa{\frac{t \cL'(t)}{\cL(t)}} = o(\cL(t)).
		\end{equation}
	\end{alphabetize}
	Moreover, $\cL$ may be chosen in $C^\infty$ with $\cL'(t) > 0$ and $\cL(t) > L(t)$ for all sufficiently large $t$.
\end{lemma}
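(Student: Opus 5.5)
I will prove the chain (b) $\Rightarrow$ (a) $\Rightarrow$ (c) $\Rightarrow$ (b). The implication (b) $\Rightarrow$ (a) is immediate: take $\delta(t) = te^{-cL(t)}$, which is trivially $O(te^{-cL(t)})$.

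For (a) $\Rightarrow$ (c), I would first smooth $L$. Put $\cL_0(t) := \int_{t}^{t+h(t)} L(u)\,du/h(t)$, or more simply define $\cL$ by averaging $L$ in the variable $s = \log t$ over windows of length $\eta(t) := e^{-cL(t)}$. The natural construction is $\cL(t) := \frac{1}{\eta}\int_0^{\eta} L(te^{v})\,dv$ with $\eta := e^{-2L(t)}$ frozen piecewise, taken at dyadic levels of $L$ to keep it nondecreasing. Since $L$ is nondecreasing, $L(t) \le \cL(t) \le L(te^{\eta}) \le L(t + 2t\eta)$, and (a) with a slightly smaller $c$ gives $\cL \sim L$. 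The derivative of such an average is controlled by the increment over the window divided by the window length, namely $t\cL'(t) \lesssim L(te^{\eta})/\eta \le 2L(t)e^{2L(t)}$. Hence $\log^+(t\cL'/\cL) \le 2L + O(1)$, which is only $O(\cL)$ and not $o(\cL)$. To obtain $o(\cL)$, I would use windows $\eta = e^{-\varepsilon L}$ with $\varepsilon \to 0$ slowly; (a) holds for every fixed $c$, so a diagonal argument produces $\varepsilon(t) \to 0$ with $L(t + te^{-\varepsilon(t)L(t)}) \sim L(t)$. This diagonal step, together with making the glued windows yield a $C^\infty$, strictly increasing $\cL$ exceeding $L$, is the main obstacle. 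I would handle it by building $\cL$ piecewise on intervals $[t_j, t_{j+1}]$ where $L$ roughly doubles or grows by a factor $1 + 1/j$, then mollifying in $\log t$ and adding a small strictly increasing term like $1/\log t$ subtracted from a constant to gain $\cL' > 0$ and $\cL > L$. Alternatively, I would add $\varepsilon' \cL$ to obtain strict inequality, which preserves $\sim$.

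For (c) $\Rightarrow$ (b), let $\delta = O(te^{-cL})$ with $\cL \sim L$. For large $t$ we have $L \ge \cL/2$, so $|\delta| \le Cte^{-c\cL/2}$. By the mean value theorem, $|\cL(t+\delta) - \cL(t)| \le |\delta| \sup_{|u-t|\le|\delta|} \cL'(u)$. Condition \eqref{e:Lderiv} gives $u\cL'(u) \le \cL(u)e^{o(\cL(u))}$. This yields
\[
\frac{|\cL(t+\delta) - \cL(t)|}{\cL(t)} \lesssim Ce^{-c\cL/2 + o(\cL)} \to 0.
\]
A bootstrap is needed here, since $\cL(u)$ for $u$ near $t$ must be compared with $\cL(t)$. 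I would do this by a continuity argument: on the maximal subinterval where $\cL(u) \le 2\cL(t)$, the same estimate shows that the increment is tiny, so the subinterval is the whole interval. Then $\cL(t+\delta) \sim \cL(t)$, and transferring through $\cL \sim L$ gives \eqref{e:Lstable}. This direction is routine; the construction in (a) $\Rightarrow$ (c) is where the care goes.
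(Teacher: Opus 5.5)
Your (b)$\Rightarrow$(a) is immediate as you say, and your (c)$\Rightarrow$(b) is sound. The continuity bootstrap closes, since on the maximal subinterval where $\cL\le 2\cL(t)$ the increment is $O\bigl(\cL(t)e^{-(c/2-2\epsilon)\cL(t)}\bigr)$. The paper sidesteps the bootstrap by applying the mean value theorem to $H=e^{-c\cL/4}$: by (c), $|H'(u)|\le 1/u$ for all large $u$, whatever the size of $\cL(u)$ relative to $\cL(t)$.

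The genuine gap is (a)$\Rightarrow$(c). Together with the ``moreover'' clause, this is the real content of the lemma, and you leave it as ``the main obstacle''. None of the constructions you float is carried out, and as described they fail:

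\begin{itemize}
\item With the forward average $\eta^{-1}\int_0^\eta L(te^v)\,dv$ and a window frozen on pieces, every switch to a smaller window makes $\cL$ jump \emph{down}. The forward average of a nondecreasing function over $[0,\eta]$ is nondecreasing in $\eta$, so $\cL$ is in general neither continuous nor nondecreasing.
\item If instead $\eta(t)=e^{-\varepsilon(t)L(t)}$ varies with $t$, then $\cL$ inherits the jumps of $L$ and $\varepsilon$.
\item Even for a fixed window, the derivative of the average in $\log t$ is $(L(te^\eta)-L(t))/\eta$. Its one-sided values differ wherever $L$ jumps, so the average is not differentiable.
\item Mollifying in $\log t$ with a fixed width destroys $\cL\sim L$ for fast scales in $\fL$ such as $L(t)=e^t$. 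The width would have to be a smooth function of size about $e^{-\varepsilon L}$, which is the original problem again.
\item Your fallback of working on intervals where $L$ grows by a factor $1+1/j$ does not specify $\cL$ on each piece. Controlling $\cL'$ there also requires a lower bound on the lengths of these intervals, which you never derive.
\end{itemize}

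The missing idea is to choose the nodes by the step length that (a) permits, rather than by the growth of $L$, and to interpolate between just two values on each step. Take thresholds $T_j$ such that $L(t(1+e^{-L(t)/j}))\le(1+1/j)L(t)$ for $t\ge T_j$; this is your diagonal step. The paper sets $t_{k+1}:=t_k(1+e^{-L(t_k)/j})$ for $T_j\le t_k<T_{j+1}$. It checks that $t_k\to\infty$: if $t_k\le T$ for all $k$, every increment would be at least $t_1e^{-L(T)}$. This gives $1\le L(t_{k+1})/L(t_k)\le 1+1/j\to 1$. It then defines
\[
\cL(t)=L(t_{k+1})+\bigl(L(t_{k+2})-L(t_{k+1})\bigr)\,\theta\Bigl(\frac{t-t_k}{t_{k+1}-t_k}\Bigr)+\frac{t}{1+t},\qquad t_k\le t\le t_{k+1},
\]
with $\theta$ smooth, nondecreasing, and equal to $0$ near $0$ and to $1$ near $1$.

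The pieces glue to a $C^\infty$ function with $\cL'>0$ and $L(t)\le L(t_{k+1})<\cL(t)<L(t_{k+2})+1$, which gives $\cL>L$ and $\cL\sim L$. On the step, $t/(t_{k+1}-t_k)\le 1+e^{L(t_k)/j}$, so $t\cL'/\cL=O(e^{L(t_k)/j})$. Hence $\log^+(t\cL'/\cL)\le L(t_k)/j+O(1)=o(\cL)$.

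The relative step length $e^{-L(t_k)/j}$ does both jobs at once. It is short enough that $L$ barely grows across a step, and long enough that the slope of the interpolant is only $e^{o(L)}$ times $L/t$. So no averaging or mollification is needed.
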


In particular, \cref{e:Lderiv} offers a sufficient condition for membership of $\fL$ that is independent of $c$.

\begin{proof}
	\hfill
	\begin{itemize}
		\item[(b) $\Rightarrow$ (a)] Taking $\delta(t) := te^{-cL(t)}$ in \cref{e:Lstable} gives \cref{e:grclass}.

		\item[(a) $\Rightarrow$ (c)] We may assume $t_0 > 0$. By (a), choose strictly increasing thresholds $T_j \to \infty$, $j \ge 1$, such that
		\begin{equation}\label{e:LTj}
			L\pa{t\pa{1 + e^{-L(t)/j}}} \le \pa{1 + \inv j}L(t), \qquad t \ge T_j.
		\end{equation}
		Starting at $t_1 \ge T_1 > 0$, define
		\begin{equation}\label{e:tk}
			t_{k+1} := t_k\pa{1 + e^{-L(t_k)/j}}, \qquad T_j \le t_k < T_{j+1}.
		\end{equation}
		Then $t_k \to \infty$. Indeed, if $t_k \le T$ for all $k$, monotonicity and $j \ge 1$ would give
		\[
		t_{k+1} - t_k = t_k e^{-L(t_k)/j} \ge t_1 e^{-L(T)} > 0,
		\]
		a contradiction. Consequently, $L(t_k) \to \infty$, and the index $j$ determined by $T_j \le t_k < T_{j+1}$ tends to infinity with $k$. By \cref{e:LTj,e:tk},
		\begin{equation}\label{e:Lratio}
			1 \le \frac{L(t_{k+1})}{L(t_k)} \le 1 + \inv{j} \to 1.
		\end{equation}
		Take a smooth nondecreasing function $\theta : [0,1] \to {[0,1]}$ equal to $0$ on a neighborhood of $0$ and to $1$ on a neighborhood of $1$. On each interval $t_k \le t \le t_{k+1}$, define
		\begin{equation}\label{e:cL}
			\cL(t) := L(t_{k+1}) + (L(t_{k+2}) - L(t_{k+1}))\theta\pa{\frac{t - t_k}{t_{k+1} - t_k}} + \frac{t}{1+t}.
		\end{equation}
		For $t < t_1$, set $\cL(t) := L(t_2) + \frac{t}{1+t}$. All positive-order derivatives of $\theta$ vanish at the endpoints, so these formulas define a positive smooth function with $\cL'(t) > 0$. By monotonicity and \cref{e:tk,e:cL}, for $t_k \le t \le t_{k+1}$,
		\begin{equation}\label{e:cLbound}
			L(t_k) \le L(t) \le L(t_{k+1}) < \cL(t) < L(t_{k+2}) + 1.
		\end{equation}
		Here $0 < \frac{t}{1+t} < 1$. Thus \cref{e:Lratio,e:cLbound} give $\cL(t) \sim L(t_k) \sim L(t)$ uniformly for $t_k \le t \le t_{k+1}$, and $\cL(t) > L(t)$.

		Using $\frac{t}{(1+t)^2} \le 1$ and writing $C := \sup |\theta'|$, we have, uniformly for $t_k \le t \le t_{k+1}$ with $T_j \le t_k < T_{j+1}$,
		\begin{equation}\label{e:cLderiv}
			\frac{t\cL'(t)}{\cL(t)} \le \inv{L(t_k)} + C\frac{L(t_{k+2})}{L(t_k)}\pa{1 + e^{L(t_k)/j}} = O\pa{e^{L(t_k)/j}}.
		\end{equation}
		Consequently, \cref{e:cLderiv} gives
		\[
		\log^+\pa{\frac{t\cL'(t)}{\cL(t)}} \le \frac{L(t_k)}{j} + O(1) = o(\cL(t)).
		\]
		This proves \cref{e:Lderiv} and the "moreover" assertion.

		\item[(c) $\Rightarrow$ (b)] Fix $c > 0$ and a real function $\delta(t) = O(te^{-cL(t)})$, and put $H(t) := e^{-\frac{c}{4}\cL(t)}$. By \cref{e:Lderiv}, eventually,
		\[
		t\abs{H'(t)} = \frac{c\cL(t)}{4}\frac{t\cL'(t)}{\cL(t)}e^{-\frac{c}{4}\cL(t)} \le \frac{c\cL(t)}{4}e^{-\frac{c}{8}\cL(t)} \to 0.
		\]
		In particular, $\abs{H'(t)} \le \inv{t}$ eventually. Put $s := t + \delta(t)$. Since $s \sim t$, the mean value theorem and $\cL(t) \le 2L(t)$ for all large $t$ give
		\begin{equation}\label{e:Hratio}
			\abs{\frac{H(s)}{H(t)} - 1} \le \frac{2\abs{\delta(t)}}{tH(t)} = O\pa{e^{-cL(t)+\frac{c}{4}\cL(t)}} \to 0.
		\end{equation}
		Taking logarithms in \cref{e:Hratio} yields
		\begin{equation}\label{e:cLdiff}
			\cL(s) - \cL(t) = -\frac{4}{c}\log\frac{H(s)}{H(t)} \to 0.
		\end{equation}
		Since $\cL(t) \to \infty$, \cref{e:cLdiff}, and $L \sim \cL$ imply
		\[
		L(s) \sim \cL(s) \sim \cL(t) \sim L(t).
		\]
		This is \cref{e:Lstable} and proves (b).
	\end{itemize}
\end{proof}

We record a specialization that will be useful in the subsequent sections.

\begin{corollary}\label{c:radius}
	Let $L \in \fL$ and $q > 0$, and put $D := \lceil e^{qL(\log r)}\rceil$ and $B := r\pa{1 + \inv{D}}$. Then, as $r \to \infty$,
	\[
	L(\log B) \sim L(\log r), \qquad \frac{\log D}{L(\log B)} \to q.
	\]
\end{corollary}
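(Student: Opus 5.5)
We apply \Cref{l:grclass} (b) with $t := \log r$ and $\delta(t) := \log B - \log r = \log(1 + 1/D)$. The point is that $\delta$ is tiny compared with $t e^{-cL(t)}$. Since $D \ge e^{qL(\log r)}$, we have
\[
0 \le \delta(t) = \log\pa{1 + \inv D} \le \inv D \le e^{-qL(t)} \le t e^{-qL(t)}
\]
for $t \ge 1$, that is, for $r \ge e$. Hence $\delta(t) = O(te^{-cL(t)})$ with $c := q > 0$. We use that $L$ is defined on $[t_0,\infty)$ and take $r$ large. Then \Cref{l:grclass} (b), with $L \in \fL$, gives
\[
L(\log B) = L(t + \delta(t)) \sim L(t) = L(\log r),
\]
which is the first assertion.

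For the second assertion, we sandwich $\log D$. Since $e^{qL(\log r)} \le D < e^{qL(\log r)} + 1 \le 2e^{qL(\log r)}$, where the last step uses $L > 0$, we get
\[
qL(\log r) \le \log D \le qL(\log r) + \log 2 .
\]
Divide by $L(\log B)$. The first assertion gives $L(\log r)/L(\log B) \to 1$. Also $L(\log B) \ge L(\log r) \to \infty$, because $L$ is unbounded and nondecreasing, so $\log 2 / L(\log B) \to 0$. Therefore $\log D / L(\log B) \to q$.

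There is no real obstacle here. The only care needed is to check that the perturbation $\log(1 + 1/D)$ fits the hypothesis of \Cref{l:grclass} (b), which the estimate $\log(1+x) \le x$ together with the factor $t \ge 1$ handles. Everything else follows from monotonicity and unboundedness of $L$.
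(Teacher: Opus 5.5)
Your proof is correct and follows the paper's own argument: bound $\log B - \log r \le 1/D \le e^{-qL(\log r)}$, apply \Cref{l:grclass}(b), then use $\log D \sim qL(\log r)$. You also spell out the factor $t \ge 1$ needed to match the form $O(te^{-cL(t)})$, which the paper leaves implicit.
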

\begin{proof}
	Since $0 \le \log B - \log r \le \inv{D} \le e^{-qL(\log r)}$, \Cref{l:grclass} gives $L(\log B) \sim L(\log r)$. The second conclusion follows from $\log D \sim qL(\log r)$.
\end{proof}

\subsection{Entire functions}

For each fixed matrix size, the coefficient formulas take the following form.
\begin{theorem}\label{t:rhoEL}
	Assume $L\in\fL$. For every entire function $f$ and every fixed matrix size $m\ge1$,
	\begin{align}
		\rho_L(f) &= \limsup_{n\to\infty} \frac{\log n}{L(\inv{n} \log\inv{s_{n,m}})} \label{e:rhoELs}\\
		&= \limsup_{n\to\infty} \frac{\log n}{L(\inv{nm} \log\inv{\abs{H_{n,m}}})}. \label{e:rhoELH}
	\end{align}
\end{theorem}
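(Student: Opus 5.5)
Set $\sigma_n := \inv{n}\log\inv{s_{n,m}}$ and $h_n := \inv{nm}\log\inv{\abs{H_{n,m}}}$, and call the right-hand sides of \cref{e:rhoELs,e:rhoELH} $p_s$ and $p_H$. The plan is to prove the cycle $\rho_L(f) \le p_s \le p_H \le \rho_L(f)$. The middle inequality is immediate: \cref{e:sHbound} gives $s_{n,m} \le \abs{H_{n,m}}^{1/m}$, hence $\sigma_n \ge h_n$. Since $L$ is nondecreasing, $L(\sigma_n) \ge L(h_n)$, so termwise the quotient for $p_s$ is at most the quotient for $p_H$.

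\emph{Step 1: $\rho_L(f) \le p_s$, via \Cref{t:Tbound}.} Fix $\varrho > p_s$. For all large $k$ we have $\log k < \varrho L(\sigma_k)$. Given large $r$, fix $q>0$, and let $D$ and $B = r(1+1/D)$ be as in \Cref{c:radius}. Put $n := \lceil e^{\varrho L(\log B)}\rceil + 1$. For $k \ge n$ this gives $L(\sigma_k) > \log k/\varrho > L(\log B)$, so monotonicity yields $\sigma_k > \log B$, that is, $s_{k,m} \le B^{-k}$. Apply \Cref{t:Tbound} with $m-1$ in place of $m$ (matrices of size $m$) and a fixed $\rho$. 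It gives
\[
T(r,f) \le n\log r + O(\log n) + O(\log D) + O(1),
\]
where $m$ is fixed and $\log\frac{1}{1-r/B} = \log(D+1)$. By \Cref{c:radius}, $\log D = O(L(\log r))$ and $L(\log B) \sim L(\log r)$. Hence $T(r,f)/\log r \le e^{\varrho(1+o(1))L(\log r)}$, which gives $\rho_L(f) \le \varrho$.

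\emph{Step 2: $p_H \le \rho_L(f)$, via Edrei's bound \cref{e:HMbound}.} Fix $\varrho > \rho_L(f)$, so that $T(R,f) \le e^{\varrho L(\log R)}\log R$ for large $R$. For $f$ entire, take $R = B = r(1+1/D)$ from \Cref{c:radius} and use the standard estimate $\log M(r,f) \le \frac{B+r}{B-r}T(B,f) \le 3D\,T(B,f)$. This gives
\[
\log M(r,f) \le e^{(q+\varrho)(1+o(1))L(\log r)}\log r
\]
without shifting $\log r$ by a constant; such a shift would not be admissible for fast-growing $L$. By \cref{e:HMbound},
\[
h_n \ge \log r - \frac{\log M(r,f)}{n} - \frac{\log m!}{nm}.
\]
For each large $n$, choose $t = \log r$ essentially maximal with $e^{(q+\varrho+\epsilon)L(t)} \le n$. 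Then the error term is at most $t\,e^{-\epsilon L(t)/2}$. This is of the form $O(te^{-cL(t)})$, so \Cref{l:grclass}(b) gives $L(h_n) \ge L(t-\delta) \sim L(t)$. Consequently $\log n/L(h_n) \le (q+\varrho+\epsilon)(1+o(1))$.

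The main obstacle is the last point of Step 2: making the choice of $t$ as a function of $n$ rigorous when $L$ may jump, so that $\log n \le (q+\varrho+\epsilon)L(t)(1+o(1))$ still holds. I would handle this with the smooth strictly increasing representative $\cL \sim L$ from \Cref{l:grclass}(c) and define $t$ through $\cL^{-1}$. Finally, letting $q,\epsilon \to 0$ and $\varrho \to \rho_L(f)$ closes the cycle.
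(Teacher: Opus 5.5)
Your proposal is correct and follows essentially the same route as the paper: the cycle $\rho_L(f)\le p_s\le p_H\le\rho_L(f)$. The first inequality uses \Cref{t:Tbound} (with $m-1$ in place of $m$ and the radius $B=r(1+1/D)$ of \Cref{c:radius}), and the last uses Harnack's estimate, Edrei's bound \cref{e:HMbound} and \Cref{l:grclass}(b). The obstacle you flag is resolved exactly as in the paper, by passing to the smooth strictly increasing representative $\cL\sim L$ and defining $t$ through $\cL(t)=\log n/(q+\varrho+\epsilon)$; the separate radius parameter $q$ and the loosely tracked $m\log r$ and $(n+m)\abs{\log\rho}$ terms from \cref{e:Tbound} are harmless.
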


\begin{proof}
	The case $f \equiv 0$ is immediate. Fix $m \ge 1$. By \cref{e:HMbound} at each fixed radius, the arguments of $L$ tend to infinity. By \Cref{l:grclass}, we may therefore assume that $L$ is smooth and strictly increasing. Let $\varrho_s$ and $\varrho_H$ denote the two limit superiors in the statement. By \cref{e:sHbound}, $\varrho_s \le \varrho_H$ so it suffices to prove $\varrho_H \le \rho_L(f) \le \varrho_s$.

	Suppose first that $\rho_L(f) < q$ and choose $\epsilon > 0$ such that $\rho_L(f) + 2\epsilon < q$. Choose $R$ by $qL(\log R) = \log n$ and put $r := R/\pa{1 + e^{-\epsilon L(\log R)}}$. By \cref{e:rhoL}, $T(R, f) \le C\log R e^{(q - 2\epsilon)L(\log R)}$ for some $C > 0$. Comparison with the Poisson integral of $\log^+\abs{f}$ on $\abs{z} = R$ and Harnack's inequality give
	\begin{equation}\label{e:TMbound}
		T(r, f) \le \log^+ M(r, f) \le \frac{R + r}{R - r} T(R, f).
	\end{equation}
	Since $\log(R/r) \le e^{-\epsilon L(\log R)}$ and $(R + r)/(R - r) = 1 + 2e^{\epsilon L(\log R)}$, \cref{e:HMbound,e:TMbound} give, for some constant $C_m > 0$ and all sufficiently large $n$,
	\begin{equation}\label{e:HdecayE}
		\inv{nm}\log\inv{\abs{H_{n,m}}} \ge \log r - \frac{\log^+ M(r, f)}{n} - \frac{\log(m!)}{nm} \ge \log R\pa{1 - C_m e^{-\epsilon L(\log R)}}.
	\end{equation}
	By \Cref{l:grclass}, \cref{e:HdecayE} implies that $L(\inv{nm}\log\inv{\abs{H_{n,m}}}) \ge (1 - o(1))\log n/q$. Thus $\varrho_H \le q$, and letting $q \to \rho_L(f)$ proves the first inequality when the order is finite. If $\rho_L(f) = \infty$, the inequality $\varrho_H \le \rho_L(f)$ holds as well.

	For the converse, suppose that $\varrho_s < q$. At radius $r$, put $N := \lceil e^{qL(\log r)}\rceil$ and $B := r\pa{1 + \inv{N}}$. By \Cref{c:radius} with $D = N$, $\log N/L(\log B) \to q > \varrho_s$. Thus, for all sufficiently large $r$ and every $n \ge N$, the definition of $\varrho_s$ gives
	\[
	L\pa{\inv{n}\log\inv{s_{n,m}}} > L(\log B).
	\]
	Monotonicity of $L$ implies $s_{n,m} \le B^{-n}$. Fix $\rho > 0$. Since $\inv{1 - r/B} = N + 1$ and $m$ is fixed, \Cref{t:Tbound}, applied with $n$ replaced by $N$ and $m$ by $m - 1$, gives
	\begin{equation}\label{e:TgrowthE}
		T(r, f) = O(N\log r).
	\end{equation}
	Since $N = O(e^{qL(\log r)})$, \cref{e:rhoL,e:TgrowthE} give $\rho_L(f) \le q$. Let $q \to \varrho_s$. If $\varrho_s = \infty$, the inequality $\rho_L(f) \le \varrho_s$ holds as well. Together with $\varrho_s \le \varrho_H$, this proves both formulas.
\end{proof}

The generalized order of an entire function can therefore also be computed from its maximum modulus.
\begin{corollary}\label{c:rhoLM}
	Let $L \in \fL$. For every entire function $f$,
	\begin{equation}\label{e:rhoLM}
		\rho_L(f) = \limsup_{r \to \infty} \frac{\log^+\pa{\log^+ M(r, f)/\log r}}{L(\log r)}.
	\end{equation}
\end{corollary}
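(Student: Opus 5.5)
The plan is to sandwich the right-hand side of \cref{e:rhoLM} between $\rho_L(f)$ and the coefficient formula \cref{e:rhoELH} with $m = 1$, so that \Cref{t:rhoEL} closes the argument. Denote the right-hand side of \cref{e:rhoLM} by $\mu$. For entire $f$ we have $N(r, f) = 0$, so $T(r, f) = m(r, f) \le \log^+ M(r, f)$. Since $\log^+$ and $L$ are monotone, this gives $\rho_L(f) \le \mu$ directly from the second expression in \cref{e:rhoL}. The case $f \equiv 0$ is trivial, and if $\mu = 0$ there is nothing to prove.

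For the reverse inequality $\mu \le \rho_L(f)$, I would reuse the Harnack step \cref{e:TMbound} from the proof of \Cref{t:rhoEL}. Assume $\rho_L(f) < q$ and choose $\epsilon > 0$ with $\rho_L(f) + 2\epsilon < q$. Given a large $r$, define $R$ by $\log R = \log r + e^{-\epsilon L(\log r)}$, so that $R \ge r\pa{1 + e^{-\epsilon L(\log r)}}$. Then
\[
\log^+ M(r, f) \le \frac{R + r}{R - r}\, T(R, f) \le \pa{1 + 2e^{\epsilon L(\log r)}}\, T(R, f).
\]
By \Cref{l:grclass}(b), applied with $\delta(t) = e^{-\epsilon L(t)} = O(te^{-\epsilon L(t)})$, we get $L(\log R) \sim L(\log r)$ and $\log R \sim \log r$. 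Hence
\[
T(R, f) \le C \log R\, e^{(q - 2\epsilon) L(\log R)} \le C' \log r\, e^{(q - 2\epsilon)(1 + o(1)) L(\log r)}.
\]
Combining the two displays gives $\log^+ M(r, f)/\log r = O\pa{e^{(q - \epsilon + o(1)) L(\log r)}}$, so $\mu \le q$. Letting $q \downarrow \rho_L(f)$ proves the claim, and the case $\rho_L(f) = \infty$ is trivial.

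An alternative route passes through \Cref{t:rhoEL}. Cauchy's inequality $|c_n| \le M(r, f) r^{-n}$ together with optimization in $r$ bounds the coefficient formula for $m = 1$ by $\mu$. This route needs the same $\fL$-stability to compare $L$ at nearby arguments, so I would use the direct Harnack argument above.

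The main obstacle is controlling the factor $(R + r)/(R - r)$ without destroying the $L$-scale. It forces $R/r - 1$ to be only about $e^{-\epsilon L}$, and the membership $L \in \fL$, via \Cref{l:grclass} and \Cref{c:radius}, is exactly what ensures that $L(\log R) \sim L(\log r)$ under such a perturbation. The remaining care is in the bookkeeping: the extra factor $e^{\epsilon L(\log r)}$ costs only $\epsilon$ in the exponent, and the $\log R$ versus $\log r$ factor is harmless because $\log R \sim \log r$.
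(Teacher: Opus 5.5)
Your proof is correct, but it takes a different route from the paper. For the nontrivial inequality $\mu \le \rho_L(f)$, the paper goes through the coefficients. From the $m = 1$ case of \Cref{t:rhoEL}, where $s_{n,1} = \abs{c_n}$, and a fixed $q > \rho_L(f)$, it extracts $\abs{c_n} \le B^{-n}$ for all $n \ge N := \lceil e^{qL(\log r)}\rceil$, with $B := r(1 + 1/N)$. It then sums the Taylor series: Cauchy's estimate on a fixed circle $\abs{z} = \rho$ handles the first $N$ terms, and a geometric series handles the tail. This gives $M(r,f) \le M(\rho,f)N(r/\rho)^N + N + 1$, hence $\log^+ M(r,f) = O(e^{qL(\log r)}\log r)$.

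You instead apply the Harnack estimate \cref{e:TMbound} directly, with $\log R = \log r + e^{-\epsilon L(\log r)}$. You absorb the factor $(R+r)/(R-r) \le 1 + 2e^{\epsilon L(\log r)}$ into an $\epsilon$-loss in the exponent, using \Cref{l:grclass}(b) to get $L(\log R) \sim L(\log r)$; your bookkeeping checks out. Your argument is shorter and self-contained: it needs only subharmonicity of $\log^+\abs{f}$ and the stability lemma, not the coefficient formula. The paper's route in fact also relies on \cref{e:TMbound}, indirectly: the bound $\abs{c_n} \le B^{-n}$ comes from the inequality $\varrho_H \le \rho_L(f)$ in \Cref{t:rhoEL}, which is proved with \cref{e:TMbound}, so it is a detour for this statement. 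What the paper's version gains is fit with the coefficient framework: it presents the corollary as a consequence of \Cref{t:rhoEL} and reuses the threshold $N$ and radius $B$ of \Cref{c:radius}.

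Your side remark about an alternative route points the wrong way. Cauchy's inequality plus optimization in $r$ bounds the coefficient formula \emph{by} $\mu$, which only reproduces the easy inequality $\rho_L(f) \le \mu$. A route through \Cref{t:rhoEL} needs the opposite passage, from coefficient decay to growth of $M(r,f)$, which is exactly the Taylor-series summation the paper performs.
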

\begin{proof}
	The inequality $T(r, f) \le \log^+ M(r, f)$ gives the lower bound in \cref{e:rhoLM}. Suppose that $\rho_L(f) < \infty$ and fix $q > \rho_L(f)$. The case $m = 1$ of the proof of \Cref{t:rhoEL} gives $\abs{c_n} \le B^{-n}$ for $n \ge N$, with $N := \lceil e^{qL(\log r)}\rceil$ and $B := r\pa{1 + \inv{N}}$. Fix $\rho > 0$. Cauchy's estimate on the circle of radius $\rho$ and a geometric series give, for $r \ge \rho$,
	\begin{equation}\label{e:Mtaylor}
		M(r, f) \le M(\rho, f)N\pa{\frac{r}{\rho}}^N + N + 1.
	\end{equation}
	Thus \cref{e:Mtaylor} gives $\log^+ M(r, f) = O(N\log r) = O(e^{qL(\log r)}\log r)$. Letting $q \to \rho_L(f)$ gives the upper bound. The infinite case follows from the lower bound.
\end{proof}

In particular, with $L$ as the identity function, \cref{e:rhoELH} reduces to Edrei's formula \eqref{e:edrei}. When $m=1$ and $L$ satisfies \cref{e:sheremeta}, \cref{e:rhoELs} follows from a theorem of Sheremeta \cite{sheremeta1970} and \Cref{c:rhoLM}. Both cases further reduce to the Poincaré--Hadamard formula \eqref{e:rhoE}.

\subsection{Generalized type}

Assuming $\rho_L(f) < \infty$, one can define the type as
\begin{equation}
	\sigma_L(f) := \limsup_{r\to\infty} \frac{T(r, f)}{e^{\rho_L(f)L(\log r)} \log r},
\end{equation}
which depends on the finer equivalence class induced by the equivalence relation $L_1(t) = L_2(t) + o(1)$.

Equivalently to \Cref{p:transc}, a meromorphic function $f$ is rational if and only if $T(r, f) = O(\log r)$. In that case, a classical result \cite[p.~33]{goldberg2008} gives
\[
\frac{T(r, f)}{\log r} \to_{r\to\infty} \max(\deg P, \deg Q),
\]
where $P$ and $Q$ are respectively the numerator and denominator of $f$. Therefore, when $f$ is rational, $\rho_L(f) = 0$ and $\sigma_L(f) = \max(\deg P, \deg Q)$. Without the logarithmic factor, the type would not give any meaningful information, which further motivates its presence.

\medskip

In \Cref{s:max} we prove a maximality theorem for the class $\fL$. We present a related maximality result for generalized type, established by Filevych \cite{filevych2001}, further developed in connection with coefficient formulas \cite{hlova2014}, and generalized by Hlova and Filevych \cite{hlova2015}. We recall it here in a coefficient form, which makes the analogy with our results more apparent.

Let $g : [t_0,\infty)\to(0,\infty)$ be nondecreasing, with $g(t)/t \to \infty$, and define
\[
g^\sharp(x) := \inv{x g^\circ(1/x)} = \inf_{t > x} \frac{g(t)}{t - x},
\]
where $g^\circ$ is the \emph{polar} of $g$ \cite[Sec.~15]{rockafellar1970}.
For a nonzero entire function $f$, define a different notion of generalized type by
\[
\tau_g(f) := \limsup_{r\to\infty} \frac{\log M(r, f)}{g(\log r)},
\]

\begin{theorem}[Hlova--Filevych, 2015]\label{t:HF}
	Assume that $g$ is continuous, nondecreasing and satisfies $g(t)/t \to \infty$. The equation
	\[
	\tau_g(f) = \limsup_{n\to\infty} \frac{n}{g^\sharp\pa{\inv{n}\log\inv{\abs{c_n}}}}
	\]
	holds for every nonzero entire function $f$ if and only if
	\[
	\log g^\sharp(t) = o(g(t)).
	\]	
\end{theorem}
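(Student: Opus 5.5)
The plan is to treat the statement as a Legendre-type duality between $\log M(r,f)$ and the coefficients. The central observation comes straight from the definition $g^\sharp(x)=\inf_{t>x} g(t)/(t-x)$: for $\tau>0$ and $x$ real,
\[
n \ge \tau g^\sharp(x) \iff x \le t - \frac{\tau g(t)}{n} \text{ for some } t > x.
\]
Equivalently, $g^\sharp(x)(t-x)\le g(t)$ for all $t>x$, and this is sharp. Write $x_n := \inv{n}\log\inv{\abs{c_n}}$, $t=\log r$, and let $\tau^*$ denote the coefficient $\limsup$ in the statement. I would prove the two inequalities $\tau^*\le\tau_g(f)$ and $\tau_g(f)\le\tau^*$ separately. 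Only the second uses the hypothesis $\log g^\sharp = o(g)$. The converse part of the ``if and only if'' is a counterexample construction.

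\emph{Step 1: $\tau^*\le \tau_g(f)$, unconditionally.} Fix $\tau>\tau_g(f)$, so that $\log M(e^t,f)\le \tau g(t)$ for large $t$. Cauchy's inequality gives $x_n \ge t - \tau g(t)/n$ for every large $t$. Taking the supremum over $t$ and using the equivalence above, I get $n \le \tau g^\sharp(x_n)$ for large $n$. The behaviour at small $t$ is harmless because $g(t)/t\to\infty$ forces the optimizing $t$ to $\infty$ as $n\to\infty$. Hence $\tau^*\le\tau$.

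\emph{Step 2: $\tau_g(f)\le\tau^*$ under $\log g^\sharp=o(g)$.} Fix $\tau>\tau^*$, so that $n\le \tau g^\sharp(x_n)$ for large $n$. For the maximal term $\mu(r)=\max_n \abs{c_n}r^n$, the inequality $g^\sharp(x_n)(t-x_n)\le g(t)$ gives $\log\mu(e^t)\le \tau g(t)+O(1)$. It remains to pass from $\mu$ to $M$ by bounding $\sum_n \abs{c_n}e^{nt}$. I split the indices into three groups:
\begin{itemize}
\item $x_n\le t$: here $n\le \tau g^\sharp(t)$, since $g^\sharp$ is nondecreasing. So there are at most $\tau g^\sharp(t)$ such terms, each at most $\mu(e^t)$.
\item $x_n>t$ with $n\le K(t)$: each term is at most $1$, so this group contributes at most $K(t)$.
\item $x_n>t$ with $n> K(t)$: this group is the tail.
\end{itemize}
This yields $\log M \le \tau g(t) + \log g^\sharp(t) + \log K(t) + (\text{tail})$, and the hypothesis makes the $g^\sharp$ term $o(g(t))$.

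\emph{Main obstacle: the tail.} For $n>K(t)$ I need $n(x_n-t)\ge 2\log n$, so that the tail is summable. If this fails, then $x_n<t+2\log n/n$, and therefore $n\le \tau g^\sharp(t+2\log n/n)$. I would first try $K(t)=g^\sharp(t+1)^2$ and exploit regularity of $g^\sharp$: it is $1/(x g^\circ(1/x))$ with $g^\circ$ a polar, hence monotone with convexity-type control. The goal is to show that a shift of order $\log n/n$ in the argument changes $\log g^\sharp$ only by $o(g(t))$. That would give $\log K = o(g)$ and a bounded tail, so $\tau_g(f)\le\tau$. If this shift control fails, the fallback is to compare with $M$ at a slightly larger radius $t+\delta$, choosing $\delta$ so that $\log(1/\delta)$ is absorbed by the hypothesis.

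\emph{Necessity.} If $\log g^\sharp(t_k)\ge \epsilon g(t_k)$ along a sequence $t_k\to\infty$, I would build $f=\sum_n c_n z^n$ with $x_n$ tuned so that $n = g^\sharp(x_n)$ exactly on long blocks of consecutive $n$, roughly $g^\sharp(t_k)$ indices per block. Then $\tau^*=1$. At $r=e^{t_k}$, about $g^\sharp(t_k)$ terms are each comparable to $\mu$, so $\log M(r,f)\ge g(t_k)+\log g^\sharp(t_k)-O(1)\ge(1+\epsilon)g(t_k)$. This gives $\tau_g(f)\ge 1+\epsilon>\tau^*$, contradicting the formula.
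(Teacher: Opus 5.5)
The paper gives no proof of this statement (it defers to Hlova and Filevych), so your argument has to stand on its own. It has a genuine gap at exactly the step you flag as the main obstacle, and a second one in the necessity part.

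Step 1 is fine. The real reason small $t$ is harmless is that $x_n\to\infty$ for entire $f$, so every $t>x_n$ is eventually large.

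In Step 2, your first route for the tail cannot work, because a unit shift can change $\log g^\sharp$ by an amount comparable to $g$. For instance, take $g=e^{h}$ with $h$ convex and $h(t+1)=e^{h(t)}$. One checks $\log g^\sharp(t)=O(h(t))=o(g(t))$, so the hypothesis holds. Yet $\log g^\sharp(t+1)\ge\log g'(t+1)\ge h(t+1)=g(t)$, so $\log K(t)\ge 2g(t)$. Your fallback of passing to radius $e^{t+\delta}$ runs into a different problem: bounding $\log\mu(e^{t+\delta})$ by $\tau g(t+\delta)$ needs $g(t+\delta)=(1+o(1))g(t)$. The hypothesis does not give this for a merely continuous nondecreasing $g$, and $\log(1/\delta)$ is not the real difficulty.

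The missing idea is the choice $\delta:=g(t)/(8g^\sharp(t))$, for which $g^\sharp(t+\delta)\le 4g^\sharp(t)$. Indeed, if $s>t$ satisfies $g(s)/(s-t)\le 2g^\sharp(t)$, then $g(s)\ge g(t)$ forces $s-t\ge 4\delta$, hence $g^\sharp(t+\delta)\le g(s)/(s-t-\delta)\le 4g^\sharp(t)$. Now put $N:=\max(n_0,\lfloor 4\tau g^\sharp(t)\rfloor+1)$. Every $n\ge N$ has $g^\sharp(x_n)\ge n/\tau>g^\sharp(t+\delta)$, so $x_n>t+\delta$ by monotonicity, and
\[
M(e^t,f)\le N\mu(e^t)+\sum_{n\ge N}e^{-n\delta}\le N\mu(e^t)+1+\frac{1}{\delta}.
\]
Both $\log N$ and $\log(1/\delta)$ are at most $\log g^\sharp(t)+O(1)=o(g(t))$, which gives $\tau_g(f)\le\tau$. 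Your three-group split then becomes unnecessary.

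In the necessity part, your claim that about $g^\sharp(t_k)$ terms are each comparable to $\mu(e^{t_k})$ is unjustified, and false in general. With $n=g^\sharp(x_n)$, the exponent $n(t_k-x_n)$ is the value at $t_k$ of a line of slope $n$ lying below $g$. That line touches $g$ (or its convex minorant) where the infimum defining $g^\sharp(x_n)$ is attained. It is within $O(1)$ of $\log\mu(e^{t_k})$ only for those $n$ whose contact point is near $t_k$; for $g=e^t$ there are only about $\sqrt{g^\sharp(t)}$ of them. So your construction only guarantees
\[
\log M(e^{t_k},f)\ge\max\{\log\mu(e^{t_k}),\log g^\sharp(t_k)\}-O(1),
\]
which contradicts $\tau^*=1$ only when $\epsilon>1$.

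The fix is to lower the block instead of saturating it. Put $c_n:=e^{-nt_k}$ for integers $n\in(\frac{\epsilon}{4}g^\sharp(t_k),\frac{\epsilon}{2}g^\sharp(t_k)]$, with the blocks made disjoint by passing to a subsequence, and $c_n:=0$ otherwise. Then $f$ is entire and $\tau^*\le\epsilon/2$. Every term of the $k$th block equals $1$ at $r=e^{t_k}$, so $\log M(e^{t_k},f)\ge\log g^\sharp(t_k)-O(1)\ge\epsilon g(t_k)-O(1)$. Hence $\tau_g(f)\ge\epsilon>\tau^*$.
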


Our definition of $g^\sharp$ agrees with their construction as the inverse of $g^*(t)/t$, where $g^*$ is the convex conjugate of $g$ \cite[Sec.~12]{rockafellar1970}. The theorem can then be deduced from \cite[Thm.~1, eq.~(4)]{hlova2015}.

\subsection{Maximality of the family of scales}\label{s:max}

The special case
\begin{equation}\label{e:rhoEL}
	\rho_L(f) = \limsup_{n\to\infty} \frac{\log n}{L(\inv{n} \log\inv{\abs{c_n}})}
\end{equation}
of \Cref{t:rhoEL} was shown by Sheremeta \cite{sheremeta1970} under the condition of \cref{e:sheremeta}. We show that the condition \eqref{e:grclass} is not only less restrictive but defines the maximal class $\fL$ for which \cref{e:rhoEL} holds.
\begin{theorem}
	Let $L : [t_0,\infty)\to(0,\infty)$ be unbounded and nondecreasing. \cref{e:rhoEL} holds for every entire function $f$ if and only if $L\in\fL$.
\end{theorem}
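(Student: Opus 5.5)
The ``if'' direction is already contained in \Cref{t:rhoEL}: for $m = 1$ we have $A_{n,1} = (c_n)$, so $s_{n,1} = \abs{H_{n,1}} = \abs{c_n}$, and \cref{e:rhoELH} becomes \cref{e:rhoEL}. The work is therefore in the ``only if'' direction. I would argue by contraposition: assuming $L \notin \fL$, I would build a lacunary entire function for which the right-hand side of \cref{e:rhoEL} is strictly larger than $\rho_L(f)$. By \Cref{c:rhoLM} with $L$ replaced by an arbitrary scale (only the elementary bound $T \le \log^+ M$ is needed for the upper estimate), it suffices to control $\log^+ M(r,f)$.

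Since $L \notin \fL$ and $L$ is nondecreasing, there exist $c > 0$, $\epsilon > 0$ and $t_k \to \infty$ such that
\[
L\pa{t_k + t_k e^{-cL(t_k)}} \ge (1 + \epsilon) L(t_k).
\]
Put $s_k := t_k(1 + e^{-cL(t_k)})$ and $n_k := \lceil e^{cL(t_k)}\rceil$. Passing to a very sparse subsequence, I may assume that $n_k$ increases extremely fast and that $t_{k+1}$ is huge compared with $s_k$. Define
\[
f(z) := \sum_k e^{-n_k t_k} z^{n_k},
\]
which is entire since $t_k \to \infty$. Along $n = n_k$ we have $\inv{n}\log\inv{\abs{c_n}} = t_k$, so the right-hand side of \cref{e:rhoEL} is at least $\limsup_k \log n_k / L(t_k) = c$.

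For the upper bound on $\rho_L(f)$, I would use $\log M(r,f) \le \max_k n_k(\log r - t_k)^+ + O(\log(\text{number of active terms}))$, which lacunarity makes harmless. Fix $r$ with $t := \log r$ and let $k$ be the index of the dominant term. There are two cases.
\begin{itemize}
\item If $t_k \le t \le s_k$, then $n_k(t - t_k) \le n_k t_k e^{-cL(t_k)} = O(t)$. This contributes $0$ to the order.
\item If $t \ge s_k$, then $L(t) \ge (1 + \epsilon)L(t_k)$, so $n_k \lesssim e^{cL(t)/(1 + \epsilon)}$ and $\log M \le n_k t$. This gives ratio at most $c/(1 + \epsilon)$.
\end{itemize}
Terms with index $j < k$ are dominated in the same way, since $L(t) \ge L(s_j)$. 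Hence $\rho_L(f) \le c/(1 + \epsilon) < c$, and \cref{e:rhoEL} fails.

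The main obstacle is the bookkeeping in the second case when several terms are comparable. I would handle it by choosing the subsequence so sparse that, for $\log r \in [t_k, t_{k+1}]$, the sum of all earlier terms is at most $r^{n_{k-1}}$-type, which is negligible against $e^{cL(\log r)/(1+\epsilon)}\log r$ or absorbed in $O(\log r)$. I would also check that this sparsification is compatible with still having the defining inequality along the retained $t_k$, which it is, since it is a property of each individual $t_k$.
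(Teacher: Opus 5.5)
Your proposal is correct and is essentially the paper's proof. It uses the same lacunary series, with coefficients $e^{-n_k t_k}$ at exponents $n_k \approx e^{cL(t_k)}$ along a sequence where the defining condition of $\fL$ fails, and the same trichotomy according to whether $\log r$ lies before $t_k$, in the short window $[t_k, s_k]$, or beyond $s_k$. The only difference is how the terms are summed: the paper uses the uniform bound $(n_k - 1)(t - t_k) \le t e^{cL(t)/(1+\epsilon)}$ for every $k$ together with the convergent factor $\sum_k e^{t - t_k} < e^t$ (needing only $t_k \ge k$), while your count of active terms after thinning the sequence gives the same $O(\log r)$ loss.
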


\begin{proof}
	Sufficiency is \cref{e:rhoEL}. For necessity, assume that $L$ is unbounded and nondecreasing. Then for every $c > 0$, $L(t + te^{-c L(t)}) \ge L(t)$. Now suppose that \cref{e:grclass} fails. Then for some $\varrho > 0$, $b > 1$, and $s_j \to \infty$, $L$ satisfies $L(t_j) \ge b L(s_j)$ where $t_j := s_j(1 + e^{-\varrho L(s_j)})$ and $j\ge 1$. Passing to a subsequence, we may assume without loss of generality that $s_j \ge j$ and the integers $n_j := \floor{e^{\varrho L(s_j)}}$ are strictly increasing. Define
	\[
	f(z) := \sum_{n = 0}^\infty c_n z^n, \quad c_n := \begin{cases}
		e^{-n_j s_j} &\text{if } n = n_j \\
		0 &\text{otherwise.}
	\end{cases}
	\]
	This series is entire because $\sqrt[n]{\abs{c_n}} \to 0$. Since $\inv{n_j}\log\inv{|c_{n_j}|} = s_j$ and $\log n_j \sim \varrho L(s_j)$, we have
	\begin{equation}\label{e:rhomaxcoeff}
		\limsup_{n\to\infty} \frac{\log n}{L(\inv{n}\log\inv{\abs{c_n}})} = \varrho.
	\end{equation}

	For sufficiently large $t$, put $A(t) := e^{\varrho L(t)/b}$. We claim that, for every $j$,
	\begin{equation}\label{e:njbound}
		(n_j-1)(t-s_j)\le tA(t)
	\end{equation}
	\begin{itemize}
		\item If $t \le s_j$, this is immediate.
		\item If $s_j < t < t_j$, then
		\[
		(n_j-1)(t-s_j) < n_j(t_j-s_j) = \floor{e^{\varrho L(s_j)}} s_j e^{-\varrho L(s_j)} \le s_j < t < tA(t).
		\]
		\item If $t \ge t_j$,
		\[
		(n_j-1)(t-s_j) < n_j t \le e^{\varrho L(s_j)} t
		\le t e^{\varrho L(t_j)/b} \le tA(t).
		\]
	\end{itemize}

	The coefficients are nonnegative. Thus, using \cref{e:njbound} and $s_j \ge j$, we obtain
	\[
	M(e^t, f) = f(e^t) = \sum_{j=1}^\infty e^{n_j(t-s_j)}\le e^{tA(t)}\sum_{j=1}^\infty e^{t-s_j} < e^{tA(t)+t}.
	\]
	Consequently,
	\[
	\log^+ M(r, f) \le \log(r) (e^{\varrho L(\log r)/b} + 1),
	\]
	and thus $\rho_L(f) \le \varrho/b < \varrho$ which, together with \cref{e:rhomaxcoeff}, contradicts \cref{e:rhoEL}.
\end{proof}

\subsection{Meromorphic functions}

\begin{theorem}\label{t:rhoL}
	Assume $L\in\fL$. For every globally meromorphic $f$ holomorphic at $0$,
	\begin{align}
		\rho_L(f) &= \limsup_{\substack{m\to\infty \\ n/m\to\infty}} \frac{\log m}{L(\inv{n} \log\inv{s_{n,m}})} \\
		&= \limsup_{\substack{m\to\infty \\ n/m\to\infty}} \frac{\log m}{L(\inv{nm} \log\inv{\abs{H_{n,m}}})} \\
		&= \limsup_{n,m\to\infty} \frac{\log\min(n,m)}{L(\inv{nm} \log\inv{\abs{H_{n,m}}})}.
	\end{align}
\end{theorem}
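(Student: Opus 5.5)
We prove the cycle $\varrho_s\le\varrho_H\le p\le\rho_L(f)\le\varrho_s$, where $\varrho_s,\varrho_H,p$ denote the three limit superiors in the order stated. The constant case is trivial. As in the proof of \Cref{t:rhoEL}, \Cref{l:grclass} lets us take $L$ smooth and strictly increasing.

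The first two inequalities are formal. \Cref{e:sHbound} gives $\inv n\log\inv{s_{n,m}}\ge\inv{nm}\log\inv{\abs{H_{n,m}}}$, and since $L$ is nondecreasing, $\varrho_s\le\varrho_H$. When $n/m\to\infty$ we have $\min(n,m)=m$ eventually. The second limsup is therefore taken along a sub-filter of the one defining $p$, so $\varrho_H\le p$.

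For $p\le\rho_L(f)$ we follow the proof of \cref{e:ineq1}, replacing the power scale by $L$. Assume $\rho_L(f)<q$ and fix $\epsilon>0$. Then $T(R,f)\le C\log R\,e^{qL(\log R)}$ for large $R$. By the remark after \Cref{t:pommerenke}, the normalization $f(0)=1$ may be dropped, and since $R_\infty=\infty$ we may take any radius $R>0$. Set $\mu:=\min(n,m)$ and choose $R$ by $(q+\epsilon)L(\log R)=\log \mu$. \Cref{t:pommerenke} then gives
\[
\inv{nm}\log\inv{\abs{H_{n,m}}}\ \ge\ \log R-\frac{2}{\mu}T(R,f)\ \ge\ \log R\bigl(1-2Ce^{-\epsilon L(\log R)}\bigr).
\]
\Cref{l:grclass}(b), applied with $\delta(t)=O(te^{-\epsilon L(t)})$ at $t=\log R$, gives $L$ of the left side $\ge(1-o(1))L(\log R)=(1-o(1))\log\mu/(q+\epsilon)$. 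Hence $p\le q+\epsilon$. Letting $\epsilon\to0$ and $q\to\rho_L(f)$ gives the claim, and the infinite case is trivial.

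The main obstacle is $\rho_L(f)\le\varrho_s$, which needs \Cref{t:Tbound} with both $m$ and $n$ growing together. Suppose $\varrho_s<q'<q$. By the definition of the limsup along the filter ($m\to\infty$, $n/m\to\infty$), there are $M_0$ and $K$ such that $L(\inv k\log\inv{s_{k,m+1}})>\log(m+1)/q'$ whenever $m\ge M_0$ and $k\ge K(m+1)$.

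Given a large radius $r$, set $D:=\lceil e^{qL(\log r)}\rceil$, $m+1:=D$, $B:=r(1+1/D)$ and $N:=K(m+1)$. By \Cref{c:radius}, $\log D/L(\log B)\to q>q'$, so for every $k\ge N$ we get $L(\inv k\log\inv{s_{k,m+1}})>L(\log B)$, that is, $s_{k,m+1}\le B^{-k}$. \Cref{t:Tbound} applies with $n$ replaced by $N$. Using $1/(1-r/B)=D+1$, it yields
\[
T(r,f)\le O(1)+(K+1)D\log r+\tfrac12\log(KD)+O\bigl(D\log D\bigr).
\]
The only delicate term is $D\log D\asymp e^{qL(\log r)}L(\log r)$. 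Since $L(t)\le e^{\epsilon L(t)}$ eventually, this term is $O(e^{(q+\epsilon)L(\log r)}\log r)$ for any $\epsilon>0$, so $\rho_L(f)\le q+\epsilon$. This is exactly where the factor $\log r$ in \cref{e:rhoL} and the definition of $\fL$, through \Cref{c:radius}, are used. Letting $\epsilon\to0$ and $q\to\varrho_s$ closes the cycle, with the case $\varrho_s=\infty$ trivial.
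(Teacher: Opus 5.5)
Your proposal is correct and follows the paper's proof essentially step for step: the same reduction to $\varrho_H\le\rho_L(f)\le\varrho_s$ via \cref{e:sHbound}, the same choice of radius in Pommerenke's bound \cref{e:pomme} for the lower estimate, and the same application of \Cref{t:Tbound} with $m+1=\lceil e^{qL(\log r)}\rceil$, $N=K(m+1)$, $B=r(1+1/(m+1))$ via \Cref{c:radius} for the upper estimate. The only point to make explicit is that the reduction to smooth $L$ here must rest on \cref{e:pomme} at each fixed radius, not on \cref{e:HMbound} as in \Cref{t:rhoEL}, to see that the arguments of $L$ tend to infinity.
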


Here,
\begin{equation}\label{e:limsupdef}
	\limsup_{\substack{m\to\infty \\ n/m\to\infty}} a_{n,m} := \lim_{M,K\to\infty} \sup_{\substack{m\ge M \\ n\ge Km}} a_{n,m}.
\end{equation}

\begin{proof}
	The case $f \equiv 0$ is immediate. Applying \cref{e:pomme} at each fixed radius shows that the arguments of $L$ tend to infinity as $n,m \to \infty$. Thus all the quotients are unchanged under within the equivalence class of asymptotic equivalence. By \Cref{l:grclass}, we may assume that $L$ is smooth and strictly increasing. Let $\varrho_s$ and $\varrho_H$ denote the first and last limit superiors, respectively. By \cref{e:sHbound}, the middle $\limsup$ lies between them, so it suffices to prove $\varrho_H \le \rho_L(f) \le \varrho_s$.

	For the first inequality, suppose that $\rho_L(f) < \infty$ and fix $\rho_L(f) < p < q$. Choose $R$ by $qL(\log R) = \log\min(n,m)$. By \cref{e:rhoL}, $T(R, f) \le C\log R e^{pL(\log R)}$ for some $C > 0$. Since $(n + m)/(nm) \le 2/\min(n,m)$, \cref{e:pomme} gives
	\begin{equation}\label{e:Hdecay}
		\inv{nm}\log\inv{\abs{H_{n,m}}} \ge \log R - \frac{2T(R, f)}{\min(n,m)} \ge \log R\pa{1 - 2Ce^{-(q - p)L(\log R)}}.
	\end{equation}
	Applying \Cref{l:grclass} to \cref{e:Hdecay}, as in the argument following \cref{e:HdecayE}, gives $\varrho_H \le q$. Letting $q \to \rho_L(f)$ proves the first inequality. If $\rho_L(f) = \infty$, it is automatic.

	For the second inequality, suppose that $\varrho_s < \infty$ and fix $\varrho_s < q_0 < q$. By \cref{e:limsupdef}, there exist $M$ and $K \ge 1$ such that
	\begin{equation}\label{e:sLbound}
		L\pa{\inv{n}\log\inv{s_{n,m}}} \ge \frac{\log m}{q_0}, \qquad m \ge M, \quad n \ge Km.
	\end{equation}
	For each sufficiently large $r$, put $m := \lceil e^{qL(\log r)}\rceil$, $N := \lceil Km\rceil$, and $B := r\pa{1 + \inv{m}}$. By \Cref{c:radius} with $D = m$, $\log m/L(\log B) \to q > q_0$. Thus $\log m/q_0 > L(\log B)$ eventually, and \cref{e:sLbound} implies $s_{n,m} \le B^{-n}$ for every $n \ge N$. Fix $0 < \rho < R_0$ and apply \Cref{t:Tbound} with $n$ replaced by $N$ and $m$ by $m - 1$. Since $N = O(m)$ and $\inv{1 - r/B} = m + 1$, \cref{e:Tbound} gives
	\[
	T(r, f) = O\pa{m\pa{\log r + \log m}}.
	\]
	Dividing by $\log r$, taking $\log^+$, and using $\log m \sim qL(\log r)$ and $\log(1 + L(\log r)) = o(L(\log r))$ yields $\rho_L(f) \le q$ by \cref{e:rhoL}. Let $q \to \varrho_s$. If $\varrho_s = \infty$, the second inequality is automatic. This proves all three equalities.
\end{proof}

Formula \cref{e:rhoH} also holds for Laurent series, with the determinants formed from coefficients that vanish for $n < \nu \in \Z$. If $f$ has a pole of order $-\nu > 0$ at the origin, put $g(z) := z^{-\nu} f(z)$. Then $g$ is holomorphic there and
\[
H_{n,m}(f) = H_{n-\nu,m}(g), \qquad \rho(f) = \rho(g),
\]
after suitably extending the notion of order. The result follows by applying \cref{e:rhoH} to $g$, since a fixed shift in $n$ does not change the $\limsup$.

\printbibliography

\end{document}